\newtheorem{thm}{Theorem}
\newtheorem{lemma}{Lemma}[section]
\newtheorem{prop}[lemma]{Proposition}
\newtheorem{cor}[lemma]{Corollary}
\theoremstyle{definition}
\newtheorem{exm}[lemma]{Example}
\def\mes{{\mathrm{mes}}}
\thanks{The author thanks Vered Rom-Kedar for posing this problem and for many useful discussions on this subject. The author is also much obliged to Dmitry Dolgopyat for enlightening comments and advice. The research was partially supported by NSF grant DMS 1665046 and by BSF grant 2016105. Part of this work was done at Weizmann Institute, where the hospitality and excellent working conditions are gratefully acknowledged.}
\begin{document}
\title{A Rectangular Billiard with Moving Slits}
\author{Jing Zhou}
\date{\today}
\maketitle

\begin{abstract}
    We describe an exponential Fermi accelerator in a two-dimensional billiard with a moving slit. 
    We have found a mechanism of trapping regions which provides the exponential acceleration for almost all initial conditions with sufficiently high initial energy. Under an additional hyperbolicity assumption, we estimate the waiting time after which most high-energy orbits start to gain energy exponentially fast.
\end{abstract}

\section{Introduction}
In an attempt to explain the existence of high energy particles in cosmic rays, Fermi \cite{Fermi} in 1949 proposed a model in which charged particles undergo repeated reflections in moving magnetic fields. 
Later in 1961 Ulam \cite{Ulam} proposed that a similar mechanism should appear in finite degree of freedom
systems. 
He described a toy model where a particle bounces elastically between two walls, one fixed and the other moving periodically. Ulam \cite{Ulam} performed numerical experiments on a piecewise linear model and conjectured that there exist escaping orbits whose energy tend to infinity with time. Since then extensive efforts have been made by both mathematicians and physicists to locate escaping orbits in various settings
(see \cite{LiLi, Dol-FA, GRKTChaos} for surveys on this subject).\\

Notably the KAM theory has eliminated the possibility of such escaping orbits in one-dimensional Fermi-Ulam model for sufficiently smooth wall motions \cite{LaLe} \cite{Pu83} \cite{Pu94}, as the prevalence of invariant curves forces all orbits to be bounded. However, unbounded solution can still be obtained in nonsmooth case. For example, Zharnitsky \cite{Zhar} has found a type of unbounded orbits in the Ulam's piecewise linear case, which grows linearly with time. In a one-dimensional Fermi-Ulam model with one discontinuity, De Simoi and Dolgopyat \cite{deSD} has showed that there exists a parameter that completely shapes the 
large energy behavior of the system: in the hyperbolic regime the escaping set has zero measure but full Hausdorff dimension while in some elliptic cases the escaping orbits have infinite measure.
There are many interesting question pertaining to 
Fermi-Ulam model with non-periodic wall motion. We refer the reader to \cite{Zha00, KO18}
for the results in quasi-periodic case.
One and a half degree of freedom models where the motion between collision is not free but
is subjected to a potential are discussed, for example in 
\cite{Pu77, Or99, Or02, Dol08, DS09, AZ}.
\\

Two-dimensional moving billiard models are natural generalization of the one-dimensional Fermi acceleration model and can often provide chaotic orbits even in the smooth case. For example, unbounded orbits were found in billiard models with the smoothly breathing boundary \cite{GT, KMKC, LKSK}. In a Lorentz gas model \cite{LRA}, the average velocity of particles grows linearly in time for stochastic perturbation of scatterer boundaries and quadratically for periodic perturbation. 
Exponential growing orbits for non-autonomous billiards were constructed  in \cite{GT2} 
but in general it remains challenging to detect a positive measure set of exponentially growing orbits.   
Exponential acceleration is also conjectured to be generic for oscillating mushrooms
\cite{GRKT14}.
The models which are closest to our setting are the following.
 Shah, Turaev and Rom-Kedar \cite{STR} investigated a rectangular billiard model with a moving slit. They proposed a random process approximation for the particle energy from which the calculated exponential growth rate agrees well with the numerical results for typical trajectories in the non-resonant case. They also observed numerically in \cite{STR} that the growth rate in the resonant case is significantly higher than that in the non-resonant case. Later they generalized the idea to higher-dimensional time-dependent billiards and obtained a new class of numerically robust exponential accelerators \cite{GRST}. The model presented in this paper was inspired by their work.\\

We study a rectangular billiard with two moving slits.
In our model, the billiard table is a unit square. Two slits are moving vertically in the table with the length of left slit $\lambda$ and the length of the right slit $1-\lambda$, where $\lambda \in (0,1)$ is a constant. The motion of the two slits are described by two $C^2$ 2-periodic functions $f_L(t)$ and $f_R(t)$ respectively. A massless ball bounces elastically against the moving slits as well as the boundary of the rectangular table.\\
\begin{figure}[h!]
   \centering
      \begin{tikzpicture}
         \draw (-2,-2) rectangle (2,2);
         \draw[pattern=north west lines] (-2,0) rectangle (-0.5,0.2)
                                         (-0.5,-0.5) rectangle (2,-0.3);
         \draw (-2,-2) node[anchor=north east]{0}
               (-2,2) node[anchor=east]{1}
               (-0.5,-2) node[anchor=north]{$\lambda$}
               (2,-2) node[anchor=north]{1}
               (-2.3,0.1) node[anchor=east]{$f_L(t)$}
               (2.3,-0.4) node[anchor=west]{$f_R(t)$};
         \draw[dotted] (-0.5,0) -- (-0.5,-2);
         \draw[<->] (-2.2,-0.2) -- (-2.2,0.4);
         \draw[<->] (2.2,-0.7) -- (2.2,-0.1);
         \draw (0.2,1) circle (0.1cm);
      \end{tikzpicture}
   \caption{Rectangular Billiard with Moving Slits}
\end{figure}
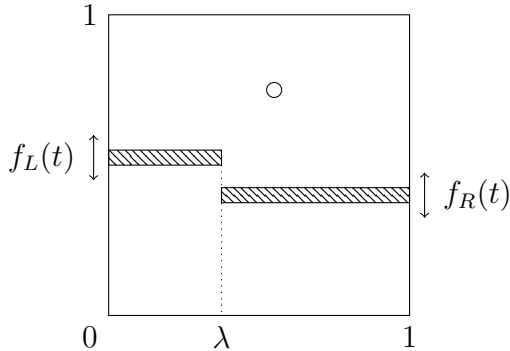

In this paper we study the simplest resonant case. Namely,
we assume that the horizontal speed the ball is 1, so the horizontal coordinate of the ball is periodic with period 2.
Hence we have 1:1 resonance between the period of moving slits and the 
period the horizontal motion of the ball.  
The ball experiences two jumps between the left and the right parts of the table during each period. We denote by $x_0$ the starting horizontal position of the ball. We assume without loss of generality that the ball starts from the left part, i.e. $0\le x_0 < \lambda$. The ball jumps from the left slit to right one at time $t_1^*=\lambda-x_0$ and then from right to the left at time $t_2^*=2-\lambda-x_0$.

We record the time and the vertical velocity of the ball immediately after each collision with the slits. We exclude from our discussion the trajectories having a collision at $x=\lambda.$ The excluded orbits constitute a measure zero set among all the initial conditions.\\

We describe in this paper a new exponential accelerator. We show that
 almost all initial conditions with sufficiently high initial velocity produce exponential energy growth in the future, provided that the relative positions of the two slits change at the time of the two jumps between left and right parts of the table. Moreover, under an additional hyperbolicity assumption we estimate the waiting time after which most high-energy orbits start to accelerate exponentially.

\section{Main Results}
In this section we describe the exponential accelerator we have found.\\
For a wide range of choices in $\lambda$ and $x_0$, the relative positions of the left and right slits change when the ball jumps from one slit to the other at time $t_1^*$ and $t_2^*$ (c.f. Figure \ref{fig2}). A trapping region is created in this case and the ball starts to gain energy exponentially fast once it gets trapped.

\begin{thm}\label{thm1}
Assume that $f_L(t_1^*)<f_R(t_1^*)$ and $f_L(t_2^*)>f_R(t_2^*)$
or $f_L(t_1^*)>f_R(t_1^*)$ and $f_L(t_2^*)<f_R(t_2^*)$.
Then there exists $V_* \gg 1$, which depends only on $f_L$ and $f_R$, such that almost every orbit 
whose initial speed is greater than $V^*$ eventually gains energy exponentially in time. 
In particular, the set of initial conditions $(t_0,v_0)$ which do not enjoy exponential energy growth 
has finite measure.
\end{thm}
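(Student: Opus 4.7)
My plan is to work with the Poincar\'e map $\Phi$ that records the pair $(t\bmod 2,\,v)$ immediately after each slit collision, and to view the motion between the two horizontal crossings $t_1^*$ and $t_2^*$ as a one-dimensional Fermi--Ulam oscillator with a fixed wall ($y=0$ or $y=1$) and a smooth moving wall ($f_L(t)$ or $f_R(t)$). I carry out the argument under the first alternative of the hypothesis; the second is symmetric, with the roles of the lower and upper zones reversed.

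I first identify the trapping region. Call an orbit state \emph{lower} if the ball is below the slit in its current half of the table and \emph{upper} otherwise. The geometric observation is that under the hypothesis the ``ceiling'' seen from the lower zone jumps \emph{upward} by $f_R(t_1^*)-f_L(t_1^*)>0$ at $t_1^*$ and by $f_L(t_2^*)-f_R(t_2^*)>0$ at $t_2^*$; since each jump moves the ceiling away from the ball, a lower-zone orbit remains in the lower zone. Equivalently, an upper-zone orbit transits to the lower zone at $t_1^*$ exactly when its height lies in the narrow band $[f_L(t_1^*),f_R(t_1^*))$, and analogously at $t_2^*$.

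Next I establish exponential growth inside the trap. Between the jumps the ball performs $\Theta(v)$ rapid vertical bounces in a slowly varying cavity, so the action $I=v\cdot H(t)$ with $H(t)=f_L(t)$ or $f_R(t)$ is adiabatically conserved up to $O(1/v)$. Combining adiabatic conservation on each half-period with the continuity of $v$ across the horizontal crossings (the ball's velocity is unchanged when $x$ jumps over $\lambda$) gives that after one horizontal period, i.e.\ after $\Theta(v)$ iterations of $\Phi$, the vertical speed is multiplied by
\[
\alpha \;=\; \frac{f_R(t_1^*)}{f_L(t_1^*)}\cdot\frac{f_L(t_2^*)}{f_R(t_2^*)}
\]
up to lower-order corrections. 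Under the hypothesis both factors exceed $1$, so $\alpha>1$ and $v$ grows exponentially. The parallel computation with $1-f_{L,R}$ in place of $f_{L,R}$ shows that the upper zone contracts $v$ by a factor strictly less than $1$ per period, so only the lower zone accelerates.

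It remains to show that almost every high-energy orbit eventually enters the trap, with exceptional set of finite measure. At high $v$ the collision phase equidistributes by the twist property of the adiabatic Poincar\'e map, so the ball's height at the crossing instants $t_1^*,t_2^*$ is approximately uniform on the current upper-zone cavity; the per-period probability of falling in the critical band is therefore bounded below by an explicit $p_0>0$ uniform in $v>V^*$. A first-return / Borel--Cantelli argument combined with $\Phi$-invariance of $dt\,dv$ then produces a finite-measure bound on the set of $(t_0,v_0)$ with $v_0>V^*$ which never transit. The main technical obstacle is quantitative: one must promote the adiabatic approximation to error bounds uniform across the ceiling discontinuities, and the equidistribution statement to a rate in $v$ that is summable against $dv_0$ on $(V^*,\infty)$. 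The $C^2$ regularity of $f_L,f_R$ on the smooth arcs, together with standard twist-map and averaging estimates in the style of De~Simoi--Dolgopyat, should provide exactly what is needed.
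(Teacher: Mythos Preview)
Your identification of the trapping mechanism and the growth/contraction rates is correct and matches the paper: your $\alpha$ is exactly the paper's $\dfrac{m_1^+ m_2^+}{m_1^- m_2^-}$ in the lower chamber, and the upper-zone contraction is the paper's $\dfrac{l_1^+ l_2^+}{l_1^- l_2^-}<1$.

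The divergence, and the genuine gap, is in the final step. You try to force transit to the trap by an equidistribution-of-phase plus Borel--Cantelli argument. This is much harder than necessary and, as written, does not close. Because the upper zone \emph{contracts} $v$ geometrically, an orbit starting at $v_0$ spends only $O(\log v_0)$ horizontal periods in the regime $v>V_*$ where your equidistribution heuristic is even plausible; after that it drops into the low-energy region $|v|<V_*$, which your argument does not touch at all. Even granting a uniform per-period transit probability $\ge p_0$ with enough quasi-independence, $O(\log v_0)$ trials yield only a power-law bound $v_0^{-c}$ on the non-transit probability, which does not give the measure-zero-per-shell statement the theorem asserts. The ``summable rate in $v$'' you flag as the main obstacle is in fact not the right target.

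The paper bypasses all of this with a soft Poincar\'e recurrence argument. Since the upper chamber decelerates for $v>V_*$ and the lower chamber is absorbing for $|v|>V_*$, any orbit with $V<v_0<V+1$ that never accelerates must stay in the bounded region $\{|v_n|<V+1\ \text{for all }n\}$. If the set $\mathcal U_V$ of such initial conditions had positive measure, Poincar\'e recurrence for the area-preserving collision map on this bounded invariant set would force almost every point of $\mathcal U_V$ to return to the shell $\{|v|>V\}$ infinitely often; but upper-zone contraction makes that impossible without passing through the lower trap, which would give escape. No equidistribution, no rates, no Borel--Cantelli. Your quantitative program is closer in spirit to what the paper does for Theorem~\ref{thm2} (the waiting-time estimate), where a hyperbolicity hypothesis is added and genuine expansion plus a growth lemma are used; for Theorem~\ref{thm1} it is the wrong tool.
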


In the presence of a trapping region, with additional hyperbolicity assumptions we estimate 
the waiting time after which most high-energy orbits start to accelerate exponentially.\\
We define a new function $f$ as follows 
\[ f(t) = 
   \begin{cases}
     f_L(t) & 0<t<t_1^* \hbox{ or } t_2^*<t<2 \\
     f_R(t) & t_1^*<t<t_2^*.
   \end{cases}
\]

We introduce a new quantity $Tr$. If the lower chamber is trapping, then we define in the upper chamber 
\begin{align*}
     Tr^U &= \left( \frac{1-f_1^-}{1-f_1^+} - a_1 \beta \right)\left( \frac{1-f_2^-}{1-f_2^+} - a_2 \alpha \right) + \left( \frac{1-f_1^+}{1-f_1^-} - a_1 \alpha \right)\left( \frac{1-f_2^+}{1-f_2^-} - a_2 \beta \right)\\
     &\ \ \ \ \ - a_1 a_2 \alpha \beta
\end{align*}
where $f_i^{\pm}=f(t_i^* \pm),$ $a_i=\dot{f}_i^-(1-f_i^+)-\dot{f}_i^+(1-f_i^-),$ 
$$\alpha=\left(\int_0^{t_1^*}+\int_{t_2^*}^2\right) \frac{ds}{(1-f(s))^2}\quad\text{and}\quad
\beta= \int_{t_1^*}^{t_2^*} \frac{ds}{(1-f(s))^2}.$$ 
If the upper chamber is trapping, then we define in the lower chamber 
\[
     Tr^L = \left( \frac{f_1^-}{f_1^+} - a_1' \beta' \right)\left( \frac{f_2^-}{f_2^+} - a_2' \alpha' \right) + \left( \frac{f_1^+}{f_1^-} - a_1' \alpha' \right)\left( \frac{f_2^+}{f_2^-} - a_2' \beta' \right) - a_1' a_2' \alpha' \beta'
\]
where $a_i'=\dot{f}_i^+ f_i^- - \dot{f}_i^- f_i^+,$ 
$\alpha'=\left(\int_0^{t_1^*}+\int_{t_2^*}^2\right) \frac{ds}{f(s)^2}$ and $\beta'= \int_{t_1^*}^{t_2^*} \frac{ds}{f(s)^2}$.

\begin{thm}\label{thm2}
 Assume $\lambda$ and $x_0$ are such that the relative positions of two slits change at two critical jumps and that $|Tr|>2$. Then there is $K, \zeta>0$ such that for any $\epsilon>0$, there exists $V_0=V_0(\epsilon)$ and $T=T(\epsilon)$ such that for each $V\geq V_0$ the complement of set $$\left\{(t_0,v_0): |v_0|\in [V, V+1]: \forall t\geq T\quad |v(t)|\geq \frac{|v(T)|}{K} e^{\zeta t}\; \right\} $$ has measure less than $\epsilon$, i.e. most orbits with initial energy $|v_0|>V_0$ start to accelerate exponentially after time $T$.
\end{thm}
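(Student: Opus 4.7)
The plan is to reduce Theorem~\ref{thm2} to a quantitative hyperbolicity statement for a return map in the high-energy limit, and then to argue that the measure of orbits not yet trapped at time $T$ decays geometrically in $T$.

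First I would set up the return map $F$ associated with one full horizontal period of length $2$. At very high vertical energy $|v|\gg 1$, the horizontal motion is essentially unperturbed, so between the jump times $t_1^*$ and $t_2^*$ the ball bounces many times against the same slit, and the induced vertical dynamics inside one chamber is well approximated by a time-dependent one-dimensional Fermi--Ulam map. The integrals $\alpha,\beta$ and the jump terms $a_i,a_i'$ appearing in the definition of $Tr$ are exactly the pieces arising in the first-order expansion of that approximation along an orbit sitting on the boundary of the trapping region. I would verify that $Tr$ is the trace of the Jacobian $DF_\infty$ of the limiting (infinite-energy) return map on the two relevant coordinates (phase $t$ and normalised vertical velocity), and that $\det DF_\infty = 1$ because the billiard flow is symplectic. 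The assumption $|Tr|>2$ is therefore equivalent to $DF_\infty$ being hyperbolic with real eigenvalues $\mu^{\pm 1}$, $|\mu|>1$, from which the exponent $\zeta$ in the statement is read off.

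Next I would control the measure of the non-trapping set
$$E_N(V) \;=\; \bigl\{(t_0,v_0)\,:\,|v_0|\in[V,V+1],\ F^k(t_0,v_0)\text{ is untrapped for }k=0,\dots,N\bigr\}.$$
Using the hyperbolicity of $DF_\infty$, together with the geometric fact that the boundary of the trapping region constructed in the proof of Theorem~\ref{thm1} is transverse to the stable cone field of $DF_\infty$, I would show that a uniformly positive fraction of the untrapped set is captured at each iterate: there exist $\rho\in(0,1)$ and $C<\infty$ with $\mes(E_{N+1}(V)) \le \rho\,\mes(E_N(V))$ for every sufficiently large $V$, so that $\mes(E_N(V)) \le C\rho^N$ uniformly in $V\ge V_0$.

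Given $\epsilon>0$, I would then choose $N = N(\epsilon)$ so that $C\rho^N<\epsilon$ and set $T = T(\epsilon) = 2N$. For $V\ge V_0(\epsilon)$ the set of initial conditions still untrapped at time $T$ has measure at most $\epsilon$; on its complement, from the moment the orbit enters the trap its velocity grows by the factor $|\mu|$ per period by the hyperbolicity of $DF$, yielding $|v(t)| \ge |v(T)|\,e^{\zeta t}/K$ for every $t\ge T$, where the constant $K$ absorbs the Lyapunov regularity of $DF$ and the transient before trapping.

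The main obstacle will be establishing the uniform geometric contraction $\mes(E_{N+1}(V)) \le \rho\,\mes(E_N(V))$ with $\rho$ and $C$ independent of $V$ for all $V\ge V_0$. This requires showing that the finite-energy map $F$ is sufficiently close, in an appropriate $C^1$ sense on the complement of a measure-negligible singular set, to its high-energy limit $F_\infty$, so that the linear hyperbolicity of $DF_\infty$ transfers to genuine nonlinear unstable-cone expansion at finite energy. A secondary difficulty is posed by the singular locus along the boundary of the trapping region, where $DF$ may blow up; there I would invoke a standard cutoff together with a growth-lemma argument to show that the subset of orbits lingering in a shrinking neighbourhood of this boundary itself has geometrically decaying measure, so that it can be absorbed into the $\epsilon$ budget.
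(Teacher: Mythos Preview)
Your proposal contains a genuine conceptual error in identifying the source of the exponential rate $\zeta$. The quantity $Tr$ is the trace of the limiting period map $G_U=G_{UU}^{21}\circ G_{UU}^{12}$ in the \emph{non-trapping} chamber (upper, say, when the lower chamber is trapping). Its hyperbolicity is used solely to push orbits out of that chamber quickly. Once an orbit enters the trapping (lower) chamber, it never leaves, and its energy grows because the Lower--Lower normal form gives $\bar{\bar{\mathcal J}}=\frac{m_1^+ m_2^+}{m_1^- m_2^-}\mathcal J+\mathcal O(1)$ with $\frac{m_1^+ m_2^+}{m_1^- m_2^-}>1$; this ratio of slit heights at the two jumps is what determines $\zeta$, and it has nothing to do with the eigenvalue $\mu$ of $DG_U$. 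Your sentence ``from the moment the orbit enters the trap its velocity grows by the factor $|\mu|$ per period by the hyperbolicity of $DF$'' conflates two separate mechanisms.

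There is also a gap in the escape argument. The one-step contraction $\mes(E_{N+1})\le\rho\,\mes(E_N)$ with a uniform $\rho<1$ does \emph{not} follow from transversality alone: a short unstable segment can sit entirely inside a single continuity box $B_n$ after applying $G_{UU}^{12}$, in which case none of it escapes to the trapping chamber on that step. The paper handles this by distinguishing \emph{good} unstable pieces (those that straddle at least two boxes, for which a definite proportion $1-D$ escapes; Lemma~\ref{lemma61}) from \emph{bad} ones, and then invoking a Growth Lemma (Lemma~\ref{lemma62}) to show that bad pieces cannot stay short for many consecutive steps. The resulting bound is not a clean one-step contraction but a combinatorial estimate over $N=kl+1$ periods balancing ``$k$ good visits'' against ``$l$ consecutive bad visits'' (Proposition~\ref{prop63}). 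You mention a growth lemma only as a secondary device near the singular boundary, but in fact it is the core of the escape estimate.
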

The proof of this result is constructive. In particular, $T$ depends logarithmically on $\epsilon$
(see equations \eqref{ChooseKL} and \eqref{T-N}).

\begin{figure}[!ht]\label{fig2}
   \centering
   \caption{Trapping Regions}
   \includegraphics[width=6cm, height=6cm]{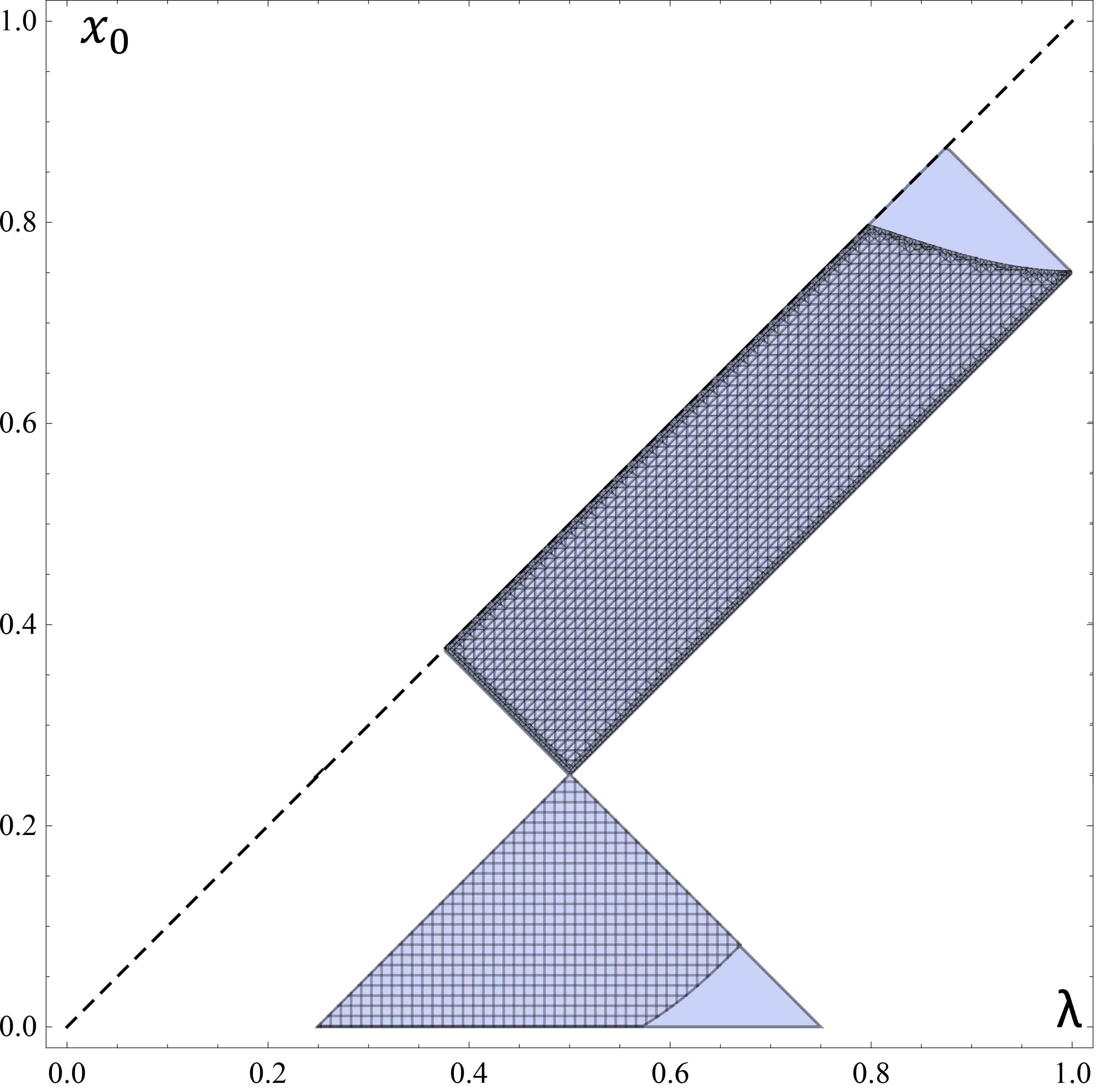}
   \floatfoot{The plot of trapping regions for $f_L(t)=0.3\cos(\pi t)+0.5$, $f_R(t)=0.3\sin(\pi t)+0.5$. The blue part indicates the values of $\lambda, x_0$ for which a trapping region exists: the upper rectangle is where the upper chamber is trapping and the lower triangle is where the lower chamber is trapping. The shaded part displays where the hyperbolicity assumption $|Tr|>2$ holds.}
\end{figure}

\begin{exm} To illustrate our results, we consider the case where
\begin{equation}
\label{ExFLR}
f_L(t)=0.3\cos(\pi t)+0.5,\quad f_R(t)=0.3\sin(\pi t)+0.5. 
\end{equation}
Then 
$$ \Delta(t):=f_L(t)-f_R(t)=0.3 \sqrt{2} \cos\left(\pi t+\frac{\pi}{4}\right). $$
A trapping region exists for $\lambda, x_0$ such that either 
$$\Delta(\lambda-x_0)>0,\quad  \Delta(2-\lambda-x_0)<0\quad\text{or}\quad 
\Delta(\lambda-x_0)<0,\quad  \Delta(2-\lambda-x_0)>0.$$
The former case is equivalent to $\lambda-x_0<0.25,\ 0.75<\lambda+x_0<1.75$; the upper chamber is trapping and the hyperbolicity assumption holds if $|Tr^L|>2$.\\ 
The latter case is equivalent to $\lambda-x_0>0.25,\ \lambda+x_0<0.75$;
the lower chamber is trapping and the hyperbolicity assumption holds if 
$|Tr^U|>2$.\\ 
Figure \ref{fig2} demonstrates that for $f_L, f_R$ defined above by \eqref{ExFLR}
the assumptions of Theorems \ref{thm1} and \ref{thm2} hold for a sizable set of parameters.
\end{exm}

The structure of the rest of the paper is the following.
In Section \ref{ScPrel} we describe the collision map.
In Section \ref{ScNF} we derive the normal form for the map obtained by considering the
next collision with the moving wall after the ball switches from left to right chamber or
{\it vice versa}. The proof of Theorem \ref{thm1} is given in Section \ref{ScTrap} and
the proof of Theorem \ref{thm2} is given in Section \ref{ScWait}.
In Section \ref{ScFinal} we summarize the tools developed
in the present paper and discuss open problems.

\section{Preliminaries}
\label{ScPrel}
Since the horizontal speed of the ball stays constant, only the vertical speed 
contributes to the energy change of the ball. 
This is why we only need to record the time $t$ and the vertical velocity $v$ 
immediately after each collision. Let us denote by $F$ the collision map.

For $i=1,2$, we denote as $R_i$ the strip in the $(t,v)$-plane bounded by the singularity line 
$\mathcal{S}_i = \{ t=t_i^* \}$ and its image $F\mathcal{S}_i.$ Also let $\tilde{R}_i = F^{-1}R_i$. 
We subdivide the singular strips $R$ into upper and lower chamber parts $R^+$ and $R^-$.\\

There are four possible scenarios when the ball makes a jump: the ball always hits the slits from above or below, the ball first hits from above then from below and vice versa.\\

We start with the easiest case when the ball always stays in the same chamber. Then the system is effectively equivalent to a Fermi-Ulam model with the motion (height) of the wall being the piecewise smooth 2-periodic function $f(t)$ with two jump discontinuities at $t_1^*$ and $t_2^*$.\\

Suppose that the ball is initially in the upper chamber. We omit the subscript $i$ as the formulas for passing through the two singularities are the same. If for $(t,v) \in \tilde{R}$ we have $f(t^*+)-f(t)<v(t^*-t)<2-f(t)-f(t^*)$, then the ball ends in the upper chamber after jumping and the model is equivalent to the one with a fixed ceiling and a moving floor (c.f. \autoref{fig3} on the left).\\
Two consecutive collisions $(t_n,v_n)$ and $(t_{n+1},v_{n+1})=F(t_n,v_n)$ satisfy 
\begin{equation}
 \left \{
  \begin{aligned}
    &v_{n+1} = v_n +2\dot{f}(t_{n+1})\\
    &2-f(t_n) -f(t_{n+1}) = v_n(t_{n+1}-t_n).
  \end{aligned}
 \right.\label{eq1}
\end{equation}

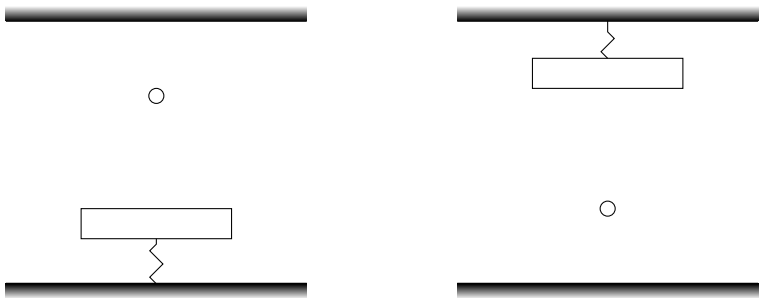
\begin{figure}[h!]
   \centering
      \begin{tikzpicture}
         \draw[thick] (0,0) -- (4,0);
         \draw[thick] (0,3.5) -- (4,3.5);
         \shade [top color=black,bottom color=white] (0,0) rectangle (4,-0.2);
         \shade [top color=white,bottom color=black] (0,3.5) rectangle (4,3.7);
         \draw (1,0.6) rectangle (3,1);
         \draw[snake=zigzag] (2,0) -- (2,0.6);
         \draw (2,2.5) circle (0.1cm);
         
         \draw[thick] (6,0) -- (10,0);
         \draw[thick] (6,3.5) -- (10,3.5);
         \shade [top color=black,bottom color=white] (6,0) rectangle (10,-0.2);
         \shade [top color=white,bottom color=black] (6,3.5) rectangle (10,3.7);
         \draw (7,2.6) rectangle (9,3);
         \draw[snake=zigzag] (8,3) -- (8,3.5);
         \draw (8,1) circle (0.1cm);
      \end{tikzpicture}
   \caption{Equivalent Fermi-Ulam Models for the Upper/Lower Chambers}\label{fig3}
\end{figure}

Similarly, suppose that the ball is initially in the lower chamber. If for $(t,v) \in \tilde{R}$ we have $f(t)-f(t^*+)<-v(t^*-t)<f(t)+f(t^*)$, then the ball ends in the lower chamber after jumping and the model is equivalent to the one with a moving ceiling and a fixed floor (c.f. \autoref{fig3} on the right).
Two consecutive collisions satisfy 
\begin{equation}
 \left \{
  \begin{aligned}
    &v_{n+1} = v_n +2\dot{f}(t_{n+1})\\
    &f(t_n) + f(t_{n+1}) = -v_n(t_{n+1}-t_n).
  \end{aligned}
 \right.\label{eq2}
\end{equation}

Now let us examine the switching cases.\\

Suppose that the ball is initially in the upper chamber and two consecutive collisions still follow \Cref{eq1} before the ball jumps from one slit to the other. However, when the ball jumps, if the next chamber is above the previous one when the ball passes through the singularities, then there is a possibility that the ball enters the lower chamber. More precisely, for $(t,v) \in \tilde{R}$, if $v(t_* -t) > 2-f(t)-f(t_*+)$, then the ball collides with 
the ceiling and then enters the lower chamber (c.f. \Cref{fig4} on the left); while if $v(t_* -t) < f(t_*+) - f(t)$, then the ball enters the lower chamber immediately after it leaves the previous slit (c.f. \Cref{fig4} on the right).\\

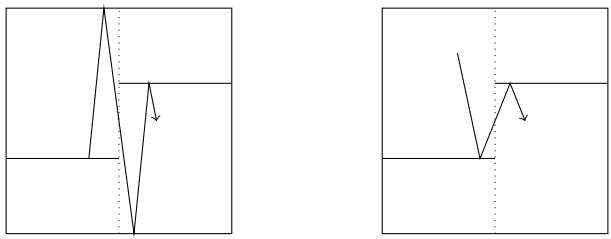
\begin{figure}[h!]
  \centering
     \begin{tikzpicture}
        \draw (0,0) rectangle (3,3)   (5,0) rectangle (8,3);
        \draw[dotted] (1.5,0)--(1.5,3) (6.5,0)--(6.5,3);
        \draw (0,1)--(1.5,1)   (1.5,2)--(3,2)   (5,1)--(6.5,1)   (6.5,2)--(8,2);
        \draw[->] (1.1,1)--(1.3,3)--(1.7,0)--(1.9,2)--(2,1.5);
        \draw[->] (6,2.4)--(6.3,1)--(6.7,2)--(6.9,1.5);
     \end{tikzpicture}
  \caption{From Upper to Lower Cases}\label{fig4}
\end{figure}

Two consecutive collisions satisfy the following \Cref{eq3} in the first case
\begin{equation}\label{eq3}
 \left\{ \begin{aligned}
  &v_n(t_{n+1}-t_n) =f(t_{n+1}) -f(t_n) +2\\
  &v_{n+1} = -v_n+2\dot{f}(t_{n+1})
  \end{aligned} \right.
\end{equation}

and the following \Cref{eq4} in the second case

\begin{equation}\label{eq4}
 \left\{ \begin{aligned}
  &v_n(t_{n+1}-t) = f(t_{n+1}) -f(t_n)\\
  &v_{n+1} = -v_n+2\dot{f}(t_{n+1}).
  \end{aligned} \right.
\end{equation}

On the other hand, suppose the ball is initially in the lower chamber and two consecutive collisions still follow \Cref{eq2} before the ball jumps from one slit to the other. When the ball jumps, if the next chamber is below the previous one when the ball passes through the singularities, then there is a possibility that the ball enters the upper chamber. More precisely, for $(t,v) \in \tilde{R}$, if $-v(t_* -t) > f(t)+f(t_*-)$, then the ball 
collides with the floor and enters the upper chamber (c.f. \Cref{fig5} on the left); 
while if $-v(t_* -t)<f(t) - f(t_*-)$, then the ball enters the lower chamber immediately 
after it leaves the previous slit (c.f. \Cref{fig5} on the right).\\

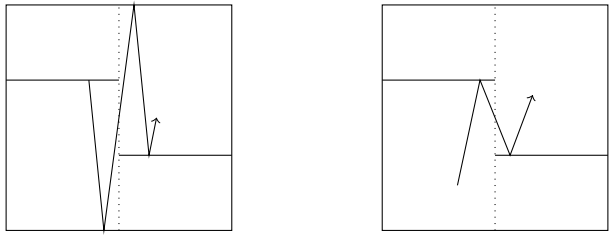
\begin{figure}[h!]
  \centering
     \begin{tikzpicture}
        \draw (0,0) rectangle (3,3)   (5,0) rectangle (8,3);
        \draw[dotted] (1.5,0)--(1.5,3) (6.5,0)--(6.5,3);
        \draw (0,2)--(1.5,2)   (1.5,1)--(3,1)   (5,2)--(6.5,2)   (6.5,1)--(8,1);
        \draw[->] (1.1,2)--(1.3,0)--(1.7,3)--(1.9,1)--(2,1.5);
        \draw[->] (6,0.6)--(6.3,2)--(6.7,1)--(7,1.8);
     \end{tikzpicture}
  \caption{From Lower to Upper Cases}\label{fig5}
\end{figure}

Two consecutive collisions satisfy the following \Cref{eq5} in the first case
\begin{equation}\label{eq5}
 \left\{ \begin{aligned}
  &v_n(t_{n+1}-t_n) =f(t_{n+1}) -f(t_n) -2\\
  &v_{n+1} = -v_n+2\dot{f}(t_{n+1})
  \end{aligned} \right.
\end{equation}

and the following \Cref{eq6} in the second case

\begin{equation}\label{eq6}
 \left\{ \begin{aligned}
  &v_n(t_{n+1}-t) = f(t_{n+1}) -f(t_n)\\
  &v_{n+1} = -v_n+2\dot{f}(t_{n+1}).
  \end{aligned} \right.
\end{equation}

\section{The Normal Form}
\label{ScNF}
In this section we study how the (vertical) velocity of the ball changes after one period $\Delta t=2$ given sufficiently large initial energy. We will first approximate the collision map $F$ with an action-angle coordinate away from singularities. Then we examine the collision dynamics when the ball passes through singularities.

\subsection{The Action-Angle Coordinate}
First we suppose that the ball collides with the slit from above and it does not make 
a jump at nearby collisions.\\
Let us denote $l(t)=1-f(t)$ and $\mathcal{L}_*=\int_0^2 l^{-2}(s) ds$.
\begin{lemma}\label{lemma41}
For $(t,v)\notin R_i \cup \tilde{R}_i$ $(i=1,2)$ and $v\gg 1$, there exists an action-angle coordinate $(\theta,I) = \Psi_U (t,v) \in \mathbb{R}/2\mathbb{Z} \times \mathbb{R}_+$ such that 
\[
\theta_{n+1} = \theta_n + \frac{2}{I_n} + \mathcal{O}\left( \frac{1}{I_n^4} \right), \ I_{n+1} = I_n + \mathcal{O}\left(\frac{1}{I_n^3}\right).
\]
In fact, $\displaystyle \theta =\theta(t) =\frac{2}{\mathcal{L}_*} \int_0^t \frac{ds}{l(s)^2} \mod 2$, $\displaystyle I = I(t,v) = \frac{\mathcal{L}_*}{2}\left(lv +l\dot{l} + \frac{l^2\ddot{l}}{3v}\right)$.
\end{lemma}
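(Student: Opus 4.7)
The strategy is a direct asymptotic expansion of the one-step map \eqref{eq1} in powers of $1/v_n$, followed by substitution into the claimed formulas. The hypothesis $(t,v)\notin R_i\cup \tilde R_i$ combined with $v\gg 1$ (so $\Delta t:=t_{n+1}-t_n$ is small) ensures the dynamics is governed by \eqref{eq1} alone, with no case split.

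Setting $l:=l(t_n)$, $\dot l:=\dot l(t_n)$, etc., I first insert the ansatz $\Delta t=\sum_k a_k/v_n^k$ into $v_n\Delta t=l(t_n)+l(t_n+\Delta t)$ and match powers, obtaining
\[
\Delta t=\frac{2l}{v_n}+\frac{2l\dot l}{v_n^2}+\frac{2l(\dot l^{\,2}+l\ddot l)}{v_n^3}+\mathcal{O}\!\left(\tfrac{1}{v_n^4}\right),
\]
and then $v_{n+1}=v_n-2\dot l(t_n+\Delta t)=v_n-2\dot l-4l\ddot l/v_n+\mathcal{O}(1/v_n^2)$ from Taylor expansion. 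For the angle, I use
\[
\theta_{n+1}-\theta_n=\frac{2}{\mathcal L_*}\int_{t_n}^{t_n+\Delta t}\frac{ds}{l(s)^2},
\]
expand the integrand to order $\Delta t^3$, substitute the expansion of $\Delta t$, and compare with the geometric-series expansion $2/I_n=\frac{4}{\mathcal L_*\,lv_n}\bigl(1-\dot l/v_n+\mathcal{O}(1/v_n^2)\bigr)$. Both sides should match through order $1/v_n^3$, yielding the claimed $\mathcal{O}(1/I_n^4)$ remainder.

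The near-invariance of $I$ is the technical core. Substitute the Step~1 expansions into $I(t_{n+1},v_{n+1})$ and Taylor expand $l(t_{n+1}), \dot l(t_{n+1}), \ddot l(t_{n+1}), 1/v_{n+1}$ around $t_n, v_n$. In $(lv)_{n+1}-(lv)_n$, the $\mathcal{O}(1)$ contributions $\dot l\,\Delta t\,v_n$ and $-2l\dot l$ already cancel because $v_n\Delta t=2l+\mathcal{O}(1/v_n)$, leaving only an $\mathcal{O}(1/v_n)$ drift. The correction term $l\dot l$ in $I$ is tailored so that its one-step change, $(l\ddot l+\dot l^{\,2})\Delta t+\mathcal{O}(\Delta t^2)=(2l^2\ddot l+2l\dot l^{\,2})/v_n+\mathcal{O}(1/v_n^2)$, exactly cancels this drift; the final term $l^2\ddot l/(3v)$ is tuned to kill the residual $\mathcal{O}(1/v_n^2)$ drift. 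Tracking all contributions through order $1/v_n^2$ then yields $I_{n+1}-I_n=\mathcal{O}(1/v_n^3)=\mathcal{O}(1/I_n^3)$.

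\textbf{Main obstacle.} The real difficulty lies in the bookkeeping of this last step: more than a dozen terms must be controlled at order $1/v_n^2$ and shown to cancel. A cleaner conceptual route is to regard the frozen system (ball bouncing in $[0,l]$ at fixed $l$) as a Hamiltonian with action $\oint p\,dq=2lv$, and apply the standard Poincar\'e--Birkhoff normal form / generating-function construction to the slowly time-dependent perturbation; then the two correction terms in the definition of $I$ arise automatically as the first two adiabatic corrections, and both the angle formula and the $\mathcal{O}(1/I_n^3)$ invariance follow without explicit cancellation checks.
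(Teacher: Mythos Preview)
Your proposal is correct and is essentially the same approach as the paper's: both rest on Taylor-expanding the one-step map \eqref{eq1} in powers of $1/v_n$, obtaining the same $\Delta t$ expansion, and then verifying the $\theta$-increment by expanding $\int_{t_n}^{t_{n+1}} l^{-2}(s)\,ds$ and matching it to $2/I_n$. The one difference in emphasis is that the paper first \emph{derives} the formula for $I$ via an iterative adiabatic-invariant scheme (your ``cleaner conceptual route'' is exactly this idea), whereas you take the formula as given and outline the direct cancellation check; the paper in fact cites the direct check to an external reference and does not write it out, so your sketch of the $I$-invariance bookkeeping is, if anything, more explicit than what appears here.
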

\begin{proof}
We can check the formula by a direct computation (c.f. Lemma 2.2 in \cite{deSD}), or we can derive it in an inductive way (c.f. Section 2.2 in \cite{Dol}). The basic idea is to find higher-order adiabatic invariants. For example, observe that 
$$v_{n+1}-v_n \approx -2\dot{l}(t_n), \ \ t_{n+1}-t_n \approx \frac{2l(t_n)}{v_n}.$$
This leads to the Euler scheme of the following ODE 
$$\frac{dv}{dt} = \frac{-v\dot{l}}{l}$$
which in turn gives us the zeroth order adiabatic invariant $I=lv$. Then we update the scheme by replacing $v$ with $I$ and look for the first order adiabatic invariant, etc. This scheme terminates at the second order adiabatic invariant $\displaystyle I=lv +l\dot{l} + \frac{l^2\ddot{l}}{3v}$.\\
Next, the formula for $\theta$ can be obtained reversely by solving the ODE $${\theta}'\frac{2l}{v}=\frac{2}{lv}$$
which leads to $\displaystyle \theta(t) = \int_0^t l^{-2}(s) ds$.\\
We observe that only the order $v$ term in $I$ is used to derive the formula for $\theta$ and it seems to produce an estimate only up to first order 
$$\theta_{n+1} - \theta_n = 2/I_n + \mathcal{O}(I_n^{-2}).$$ 
But in fact by noting the Taylor expansion of $l^{-2}$ and that
\[
t_{n+1} - t_n = \frac{2l(t_n)}{v_n} + \frac{2l(t_n)\dot{l}(t_n)}{v_n^2} + \frac{2l(t_n)\dot{l}(t_n)^2 + 2l(t_n)^2\ddot{l}(t_n)}{v_n^3} + \mathcal{O}(v_n^{-4})
\]
we obtain that 
\begin{align*}
\int_{t_n}^{t_{n+1}} l^{-2}(s) ds 
&= \frac{t_{n+1} - t_n}{l_n^2} - \frac{\dot{l}_n}{l_n^3}(t_{n+1} - t_n)^2 + (\frac{\dot{l}_n^2}{l_n^4}-\frac{\ddot{l}_n}{3l_n^3})(t_{n+1} - t_n)^3 + \mathcal{O}(v_n^{-4})\\
&= \frac{2}{l_nv_n} - \frac{2\dot{l}_n}{t_nv_n^2} +\frac{2\dot{l}_n^2-\frac{2}{3}l_n\ddot{l}_n}{l_nv_n^3} +\mathcal{O}(v_n^{-4})\\
&= \frac{2}{I_n} + \frac{2\dot{l}_n}{v_nI_n} +\frac{2l_n\ddot{l}_n}{3v_n^2I_n}- \frac{2\dot{l}_n}{l_nv_n^2} + \frac{2\dot{l}_n^2-\frac{2}{3}l_n\ddot{l}_n}{l_nv_n^3}+\mathcal{O}(v_n^{-4})\\
&=\frac{2}{I_n} + \frac{2\dot{l}_n^2}{v_n^2 I_n} + \frac{2l_n\ddot{l}_n}{3v_n^2 I_n} + \frac{2\dot{l}_n^2-\frac{2}{3}l_n\ddot{l}_n}{l_nv_n^3} + \mathcal{O}(v_n^{-4})\\
&=\frac{2}{I_n} + \mathcal{O}(v_n^{-4})
\end{align*}
where $l_n = l(t_n)$ and $\dot{l}_n = \dot{l}(t_n)$, which produces the desired third order estimate.\\

Finally, we need to rescale $\theta$ (and hence $I$) to make $\theta$ 2 periodic.
\end{proof}

Next we assume that the ball collides at the slits from below and it does not make a jump at nearby collisions.\\
We introduce a new function $g(t)=f(t)+1$. 
Then \Cref{eq2} becomes the same as \Cref{eq1} with $g$ in place of $f$ 
\begin{equation}
 \left \{
  \begin{aligned}
    &v_{n+1} = v_n +2\dot{g}(t_{n+1})\\
    &2-g(t_n) -g(t_{n+1}) = v_n(t_{n+1}-t_n)
  \end{aligned}
 \right.\label{eq7}
\end{equation}
Therefore all the computation above in Lemma \ref{lemma41} applies with $g$ in the place of $f$.\\
We define $m(t) = 1-g(t) = -f(t)$ and $\mathcal{M}_*= \int_0^2 m(s)^{-2} ds$.
We have an action-angle coordinate if the collision occurs in the lower chamber away from singularities 
\begin{lemma}\label{lemma42}
For $(t,v)\notin R_i \cup \tilde{R}_i$ $(i=1,2)$ and $v\ll -1$, there exists an action-angle coordinate $(\zeta,J) = \Psi_L (t,v) \in \mathbb{R}/2\mathbb{Z} \times \mathbb{R}_+$ such that 
\[
\zeta_{n+1} = \zeta_n + \frac{2}{J_n} + \mathcal{O}\left( \frac{1}{J_n^4} \right), \ J_{n+1} = J_n + \mathcal{O}\left(\frac{1}{J_n^3}\right).
\]
In fact, $\displaystyle \zeta =\zeta(t) =\frac{2}{\mathcal{M}_*} \int_0^t \frac{ds}{m(s)^2} \mod 2$, $\displaystyle J = J(t,v) = \frac{\mathcal{M}_*}{2}\left(mv +m\dot{m} + \frac{m^2\ddot{m}}{3v}\right)$.
\end{lemma}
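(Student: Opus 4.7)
The plan is to deduce Lemma \ref{lemma42} from Lemma \ref{lemma41} by the substitution $g(t)=f(t)+1$ anticipated in the paragraph preceding the statement. Under this change of variable \eqref{eq7} has exactly the form of \eqref{eq1} with $g$ replacing $f$ and $m(t)=1-g(t)=-f(t)$ playing the role of $l(t)$. The only structural difference is that the recorded velocity $v$ is now negative, but since $m<0$ as well, the leading term $mv=-fv=f|v|$ of the candidate action $J$ is positive, so $J>0$ throughout the regime $v\ll -1$. The cancellations that drive the adiabatic scheme in Lemma \ref{lemma41} depend only on the algebraic form of \eqref{eq1}, not on the signs of $l$ or $v$, so the entire construction transfers word for word.

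Concretely, I would first verify that
\[
J = \frac{\mathcal{M}_*}{2}\left(mv + m\dot{m} + \frac{m^2\ddot{m}}{3v}\right)
\]
is a second-order adiabatic invariant of \eqref{eq7}. From the second equation of \eqref{eq7} one reads off the asymptotic expansion
\[
t_{n+1} - t_n = \frac{2 m_n}{v_n} + \frac{2 m_n \dot{m}_n}{v_n^2} + \frac{2 m_n \dot{m}_n^2 + 2 m_n^2 \ddot{m}_n}{v_n^3} + \mathcal{O}(v_n^{-4}),
\]
and substituting into $v_{n+1} = v_n + 2\dot{g}(t_{n+1})$ together with Taylor expansions of $m,\dot m,\ddot m$ around $t_n$ produces $J_{n+1} - J_n = \mathcal{O}(J_n^{-3})$ through the same algebraic cancellations as in Lemma \ref{lemma41}. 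For the angle, the natural choice $\zeta(t) = \frac{2}{\mathcal{M}_*}\int_0^t m(s)^{-2}\,ds \mod 2$ solves $\dot\zeta = 2/(\mathcal{M}_* m^2)$ and is well defined because $m^{-2}=f^{-2}$ is smooth and strictly positive on the relevant time interval; the Taylor-expansion computation at the end of the proof of Lemma \ref{lemma41} then applies line by line and upgrades the leading-order estimate to $\zeta_{n+1} - \zeta_n = 2/J_n + \mathcal{O}(J_n^{-4})$.

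No genuinely new analytical obstacle arises, so the remaining work is sign-bookkeeping. The items that deserve a quick check are that $\mathcal{M}_* > 0$, that $J$ stays positive throughout the regime $v \ll -1$ (so that $2/J_n$ is meaningful), and that $\Psi_L$ is a diffeomorphism onto its image off the singular strips; the last point follows from the implicit function theorem applied to the defining relations for $(\zeta, J)$.
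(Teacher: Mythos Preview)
Your proposal is correct and follows exactly the route the paper takes: the paper reduces Lemma~\ref{lemma42} to Lemma~\ref{lemma41} via the substitution $g(t)=f(t)+1$, noting that \eqref{eq7} has the form of \eqref{eq1} with $m=1-g$ in place of $l$, and then declares that ``all the computation above in Lemma~\ref{lemma41} applies.'' Your sketch fills in a bit more detail (the sign bookkeeping, positivity of $J$) than the paper itself provides, but the strategy is identical.
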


\subsection{The Normal Forms}
In this section we present the Poincar\'e map $P$ from one singular strip to the other in four possible scenarios. We assume the initial energy of the ball is sufficiently large $|v_0| > V_*$ for some large $V_*$ in all the cases.

\subsubsection{The Upper-Upper Chamber Case}
We begin with the upper-upper chamber case, i.e. the ball stays in the upper chamber 
both before and after it makes a jump. Lemma \ref{lemma41} already depicts the dynamics away from singularities. Now let us scrutinize what occurs near the singularities $t_i^*$ ($i=1,2$) when the ball makes a jump.\\

\begin{figure}[!ht]
   \centering
      \begin{tikzpicture}
         \draw[->] (0,0)--(5,0) node[anchor=west]{$t$};
         \draw[->] (0,0)--(0,3) node[anchor=east]{$v$};
         \draw (0.6,0)--(1.3,2.5)  (1.5,0)--(1.5,2.5)  (2.4,0)--(1.7,2.5)  (2.6,0)--(3.3,2.5)  (3.5,0)--(3.5,2.5)  (4.4,0)--(3.7,2.5);
         \draw (1.25,2.5) node[anchor=south]{$\tilde{R}_1^+$}
               (1.75,2.5) node[anchor=south]{$R_1^+$}
               (3.25,2.5) node[anchor=south]{$\tilde{R}_2^+$}
               (3.75,2.5) node[anchor=south]{$R_2^+$}
               (1.5,0) node[anchor=north]{$t_1^*$}
               (3.5,0) node[anchor=north]{$t_2^*$}
               (0,0) node[anchor=east]{$V$};
         \filldraw (1.7,1.5) circle (1pt) node[anchor=north](1){$(t_1,v_1)$};
         \filldraw (3.3,1.9) circle (1pt) node[anchor=south](2){$(\tilde{t}_2,\tilde{v}_2)$};
         \filldraw (3.7,1.3) circle (1pt) node[anchor=west](3){$(\bar{t}_2,\bar{v}_2)$};
         \draw[->] (1) to [bend left] (2);
         \draw[->] (2) to [bend right] (3);
         \draw (2.5,2) node[anchor=north]{$F^{n_1}$};
      \end{tikzpicture}
   \caption{The Poincar\'e Map $P^{12}_{UU}$ on the Singular Strips}                          
   \end{figure}
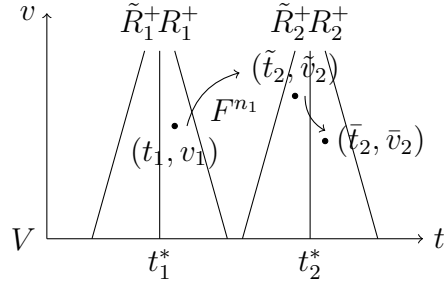

For $(t_1,v_1) \in R_1^+$ with $v_1 > V_*$, we denote $(\tilde{t}_2,\tilde{v}_2) =F^{n_1}(t_1,v_1) \in \tilde{R}_2^+$, where $\displaystyle n_1 = [\frac{I_1}{2}(\theta_2^* - \theta_1)]$ and $\displaystyle \theta_2^* = \frac{2}{\mathcal{L}_*}\int_0^{t_2^*} \frac{ds}{l(s)^2}$, and $(\bar{t}_2,\bar{v}_2) = F(\tilde{t}_2,\tilde{v}_2) \in R_2^+$. Similarly, for $(t_2,v_2) \in R_2^+$ with $v_2 \gg 1$, we denote $(\tilde{t}_1,\tilde{v}_1) =F^{n_2}(t_2,v_2) \in \tilde{R}_1^+$, where $\displaystyle n_2 = [\frac{I_2}{2}(2+\theta_1^* - \theta_2)]$ and $\displaystyle \theta_1^* = \frac{2}{\mathcal{L}_*}\int_0^{t_1^*} \frac{ds}{l(s)^2}$, and $(\bar{t}_1,\bar{v}_1) = F(\tilde{t}_1,\tilde{v}_1) \in R_1^+$.\\
We introduce a new pair of variables $(\tau, \mathcal{I})$ defined on the upper singular strips  
\[
\tau =
\begin{cases}
I(\theta - \theta_1^*) &\hbox{on $R_1^+$}\\
I(\theta - \theta_2^*) &\hbox{on $R_2^+$}
\end{cases},\  
\mathcal{I}=\frac{I}{\mathcal{L}_*} \hbox{  on $R_1^+$, $R_2^+$}
\]

Now we present the Poincar\'e maps $P^{12}_{UU}:R_1^+ \to R_2^+$ and $P^{21}_{UU}:R_2^+ \to R_1^+$ which captures the collision dynamics when the ball travels from one singular strips to the other. We need the following constants $(i=1,2)$:
\begin{align*}
&\Delta_i = \frac{1}{2} \frac{l(t_i^*+)}{l(t_i^*-)} \left( l(t_i^*-)\dot{l}(t_i^*+) - l(t_i^*+)\dot{l}(t_i^*-) \right),\\
&\Delta'_i = \frac{1}{8} l(t_i^*+)^2 \left( l(t_i^*-)\ddot{l}(t_i^*+) - l(t_i^*+)\ddot{l}(t_i^*-) \right),\\
&\Delta''_i = \frac{1}{24} l(t_i^*-)l(t_i^*+) \left( l(t_i^*-)\ddot{l}(t_i^*+) - l(t_i^*+)\ddot{l}(t_i^*-) \right).
\end{align*}

\begin{prop}[Upper-Upper]\label{prop43}
Suppose that $(\tau_1,\mathcal{I}_1) \in R_1^+$ and $\mathcal{I}_1>V_*$, and that 
$$f(t^*_2+)-f(t^*_2-) \lesssim l_2^- \{ \mathcal{L}_*\mathcal{I}_1(\theta_2^* - \theta_1^*) -\tau_1 \}_2 \lesssim 2-f(t^*_2+)-f(t^*_2-),$$ 
where $\lesssim$ means the inequality holds up to an error of order $\mathcal{O}(\frac{1}{\mathcal{I}})$, 
and $\{\bullet\}_2 = \bullet \mod 2$. Then the Poincar\'e map $P_{UU}^{12}: R_1^+ \to R_2^+$ is given by $(\bar{\tau}_2,\bar{\mathcal{I}}_2) = G_{UU}^{12}(\tau_1,\mathcal{I}_1) + H_{UU}^{12}(\tau_1,\mathcal{I}_1) + \mathcal{O}(\mathcal{I}_1^{-2})$ where 
\[ 
G_{UU}^{12}(\tau_1,\mathcal{I}_1) =\left( -\frac{l_2^-}{l_2^+} \{ \mathcal{L}_*\mathcal{I}_1(\theta_2^* - \theta_1^*) -\tau_1 \}_2 + 1+ \frac{l_2^-}{l_2^+}, \frac{l_2^+}{l_2^-} \mathcal{I}_1 + \Delta_2 (\bar{\tau}_2 -1) \right)
\]
and
\[ 
H_{UU}^{12}(\tau_1,\mathcal{I}_1) =\left(0, \Delta'_2 (\bar{\tau}_2 -1)^2/\mathcal{I}_1 + \Delta''_2/\mathcal{I}_1 \right)
\]
Similarly, suppose that $(\tau_2,\mathcal{I}_2) \in R_2^+$, $\mathcal{I}_2 > V_*$, and that 
$$f(t^*_1+)-f(t^*_1-) \lesssim l_1^- \{ \mathcal{L}_*\mathcal{I}_2(2+\theta_1^* - \theta_2^*) -\tau_2 \}_2  \lesssim 2-f(t^*_1+)-f(t^*_1-).$$ 
Then the Poincar\'e map $P_{UU}^{21}: R_2^+ \to R_1^+$ is given by 
$$(\bar{\tau}_1,\bar{\mathcal{I}}_1) = G_{UU}^{21}(\tau_2,\mathcal{I}_2) + H_{UU}^{21}(\tau_2,\mathcal{I}_2) + \mathcal{O}(\mathcal{I}_2^{-2})$$ 
where 
\[ 
G_{UU}^{21}(\tau_2,\mathcal{I}_2) =\left( -\frac{l_1^-}{l_1^+} \{ \mathcal{L}_*\mathcal{I}_2(2+\theta_1^* - \theta_2^*) -\tau_2 \}_2 + 1+ \frac{l_1^-}{l_1^+}, \frac{l_1^+}{l_1^-} \mathcal{I}_2 + \Delta_1 (\bar{\tau}_1 -1) \right),
\] 
\[ 
H_{UU}^{21}(\tau_2,\mathcal{I}_2) =\left(0, \Delta'_1 (\bar{\tau}_1 -1)^2/\mathcal{I}_2 + \Delta''_1/\mathcal{I}_2 \right)
\]
and $l_i^{\pm} = l(t_i^* \pm)$.
\end{prop}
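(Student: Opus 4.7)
The plan is to decompose $P_{UU}^{12}$ into two stages: (a) a long flight $R_1^+ \to \tilde R_2^+$ during which the ball bounces in the upper chamber away from the singularities, governed by Lemma~\ref{lemma41}; and (b) a single collision $\tilde R_2^+ \to R_2^+$ that crosses the jump of $f$ at $t_2^*$, governed by \Cref{eq1}. For stage~(a) I iterate $F$ for $n_1 = \lfloor \tfrac{I_1}{2}(\theta_2^*-\theta_1)\rfloor$ steps. Telescoping the estimates of Lemma~\ref{lemma41}, together with $n_1 = O(\mathcal{I}_1)$, yields $\tilde I_2 - I_1 = O(\mathcal{I}_1^{-2})$ and $\tilde \theta_2 = \theta_1 + 2n_1/I_1 + O(\mathcal{I}_1^{-3})$. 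Since $I_1 = \mathcal{L}_*\mathcal{I}_1$ and $\tau_1 = I_1(\theta_1 - \theta_1^*)$, this rewrites as
\[ I_1(\theta_2^* - \tilde\theta_2) = \{\mathcal{L}_*\mathcal{I}_1(\theta_2^*-\theta_1^*) - \tau_1\}_2 + O(\mathcal{I}_1^{-2}), \]
because $n_1$ absorbs exactly the integer part in units of $2$. Using $d\theta/dt = 2/(\mathcal{L}_* l^2)$ at $t_2^*{-}$ and $\tilde v_2 = 2\mathcal{I}_1/l_2^- + O(1)$ extracted from the $I$-formula, I recover $\delta_- := t_2^* - \tilde t_2$ as an explicit function of $(\tau_1,\mathcal{I}_1)$ with residual $O(\mathcal{I}_1^{-2})$.

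For stage~(b) I set $\bar t_2 = t_2^* + \delta_+$ and insert into \Cref{eq1}. Taylor-expanding $l = 1-f$ and $\dot l$ from the left at $t_2^*{-}$ and from the right at $t_2^*{+}$ up to second order, the position equation
\[ \tilde v_2(\delta_+ + \delta_-) = l(\tilde t_2) + l(\bar t_2) \]
becomes a polynomial equation in $\delta_+$ whose coefficients involve $l_2^\pm, \dot l_2^\pm, \ddot l_2^\pm$; inverting it gives $\delta_+$ to the required order. The hypothesis $f_2^+-f_2^- \lesssim l_2^- I_1(\theta_2^* - \tilde\theta_2) \lesssim 2 - f_2^+ - f_2^-$ is precisely what guarantees $\delta_+>0$ and that $\bar t_2$ lands in $R_2^+$ rather than in the switching regimes (\Cref{eq3})--(\Cref{eq4}). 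Inserting $\delta_+$ into $\bar v_2 = \tilde v_2 + 2\dot f(\bar t_2) = \tilde v_2 + 2\dot f_2^+ + 2\ddot f_2^+ \delta_+ + O(\delta_+^2)$ completes the determination of $(\bar t_2,\bar v_2)$.

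It remains to convert $(\bar t_2, \bar v_2)$ back to $(\bar\tau_2, \bar{\mathcal{I}}_2)$. To leading order $\bar{\mathcal{I}}_2 = (l_2^+/l_2^-)\mathcal{I}_1$ (since $v$ is nearly unchanged across the collision while $l$ jumps), and $\bar\tau_2 = \bar I_2 \cdot 2\delta_+/(\mathcal{L}_*(l_2^+)^2) + O(\delta_+^2)$; substituting the explicit form of $\delta_+$ produces the formula $-\tfrac{l_2^-}{l_2^+}\{\cdot\}_2 + 1 + \tfrac{l_2^-}{l_2^+}$ claimed in $G_{UU}^{12}$. The $\dot l$-corrections in the collision equation, once propagated through the substitution, produce the term $\Delta_2(\bar\tau_2-1)$ in $\bar{\mathcal{I}}_2$; the $\ddot l$-terms produce $\Delta_2'(\bar\tau_2-1)^2/\mathcal{I}_1 + \Delta_2''/\mathcal{I}_1$ after grouping contributions by their polynomial dependence on $\bar\tau_2-1$. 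The map $P_{UU}^{21}$ is handled identically by swapping the roles of the two singular strips and shifting the accumulated phase by $2$.

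The main obstacle is the bookkeeping of error terms across the jump: one must carry both one-sided Taylor expansions of $l,\dot l$ at $t_2^*$ to second order, simultaneously track that the $(\theta,I)\mapsto(\tau,\mathcal{I})$ rescaling differs between $R_1^+$ and $R_2^+$, and verify that after all cancellations the residual is genuinely $O(\mathcal{I}_1^{-2})$ rather than $O(\mathcal{I}_1^{-1})$. The decomposition $P = G+H+O(\mathcal{I}^{-2})$, with $H$ collecting precisely the first-order corrections arising from the jumps of $\dot l$ and $\ddot l$, is the grouping that makes the identification of the constants $\Delta_2, \Delta_2', \Delta_2''$ direct.
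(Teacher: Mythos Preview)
Your approach is correct and matches the paper's proof: both decompose $P_{UU}^{12}$ into the long free flight governed by Lemma~\ref{lemma41} and the single collision across the jump at $t_2^*$, handled by Taylor-expanding $l,\dot l,\ddot l$ from both sides and inserting into \Cref{eq1}. The paper presents the jump computation first and invokes Lemma~\ref{lemma41} at the end, and it works directly with the combination $\tfrac{l_-}{l_+}\bar I - \tilde I$ rather than first solving explicitly for your $\delta_+$, but the substance and the bookkeeping of the $\dot l$- and $\ddot l$-corrections are identical.
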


\begin{proof}
We only derive the formula for $P_{UU}^{12}.$ The formula for $P_{UU}^{21}$ can be obtained in a similar fashion.\\

For the ease of notation we drop the sub/superscripts whenever they are clear from the context. 
Note that near the jump discontinuity at $t^*$
\begin{align*}
&l(\tilde{t}^*) = l_-(t^*) + \dot{l}_-(\tilde{t}-t^*) + \frac{1}{2}\ddot{l}_-(t^*)(\tilde{t}-t^*)^2 + \mathcal{O}(\tilde{v}^{-3}),\\
&l(\tilde{t}^*) = l_+(t^*) + \dot{l}_+(\bar{t}-t^*) + \frac{1}{2}\ddot{l}_+(t^*)(\bar{t}-t^*)^2 + \mathcal{O}(\tilde{v}^{-3}),\\
&\dot{l}(\tilde{t}^*) = \dot{l}_-(t^*) + \ddot{l}_-(\tilde{t}-t^*) + \mathcal{O}(\tilde{v}^{-2}),\\
&\dot{l}(\tilde{t}^*) = \dot{l}_+(t^*) + \ddot{l}_+(\bar{t}-t^*) + \mathcal{O}(\tilde{v}^{-2}),\\
&\ddot{l}(\tilde{t}^*) = \ddot{l}_-(t^*) +\mathcal{O}(\tilde{v}^{-1}),\\
&\ddot{l}(\tilde{t}^*) = \ddot{l}_+(t^*) +\mathcal{O}(\tilde{v}^{-1}),
\end{align*}
and that 
\[
\bar{v} = \tilde{v} - 2\dot{l}(\bar{t}), \quad \bar{t}-\tilde{t} = \frac{l(\bar{t})+l(\tilde{t})}{\bar{v}}.
\]
Hence by solving iteratively the implicit equation we attain
\[
\bar{t}-\tilde{t}=\frac{l_+ +l_-}{\tilde{v}} + (\dot{l}_+ +\dot{l}_-)\frac{\bar{t}-t^*}{\tilde{v}} - \frac{\dot{l}_-(l_+ +l_-)}{\tilde{v}^2} + \mathcal{O}(\tilde{v}^{-3}).
\]
By a straightforward but tedious computation we arrive at
\begin{align*}
2\mathcal{L}_*^{-1} \left( \frac{l_-}{l_+}\bar{I}-\tilde{I} \right) 
&= (l_+\dot{l}_- - l_-\dot{l}_+) - \frac{l_+\dot{l}_- - l_-\dot{l}_+}{l_+}(\bar{t} - t^*)\bar{v}\\
&\ \ \ \ +(l_+ \ddot{l}_- - l_- \ddot{l}_+ + \frac{l_-}{l_+}\dot{l}_+^2 - \dot{l}_-\dot{l}_+)(\bar{t} - t^*)\\
&\ \ \ \ +\left(\frac{l_-}{3}(l_+ \ddot{l}_+ - l_- \ddot{l}_-) + \frac{\ddot{l}_-}{2}(l_-^2 - l_+^2)\right)\frac{1}{\bar{v}}\\
&\ \ \ \ - \frac{l_+ \ddot{l}_- - l_- \ddot{l}_+}{2l_+}(\bar{t} - t^*)^2\bar{v} + \mathcal{O}(\tilde{v}^{-2})\\
&= \frac{l_-\dot{l}_+ - l_+\dot{l}_-}{l_+}\left( (\bar{t} - t^*)\bar{v}\big(1+\frac{\dot{l}_+}{\bar{v}}\big) - l_+ \right)\\
&\ \ \ \ + \frac{l_- \ddot{l}_+ - l_+ \ddot{l}_-}{2l_+ \bar{v}}\left( \big((\bar{t} - t^*)\bar{v}- l_+ \big)^2 + \frac{l_-l_+(l^- \ddot{l}_- - l_+ \ddot{l}_+)}{3(l_- \ddot{l}_+ - l_+ \ddot{l}_-)} \right) + \mathcal{O}(\tilde{v}^{-2}).
\end{align*}
It can be checked directly by Taylor expanding $\bar{I}$ and $\bar{\theta}$ that 
\[
\bar{\tau} = \bar{I}(\bar{\theta} - \theta_2^*) = \frac{1}{l_+} \left( (\bar{t}-t^*)\bar{v} + \dot{l}_+ (\bar{t}-t^*) \right) + \mathcal{O}(\tilde{v}^{-2}).
\]
Thus eventually we have
\[
\bar{\mathcal{I}} =  (l_+/l_-) \tilde{\mathcal{I}} + \Delta (\bar{\tau} -1) + \Delta' (\bar{\tau} -1)^2/\tilde{\mathcal{I}} + \Delta''/\tilde{\mathcal{I}} + \mathcal{O}(\tilde{\mathcal{I}}^{-2}).
\]
Now we compute $\bar{\tau}$. Observe that 
\[
\bar{\tau} = \frac{\bar{v} + \dot{l}_+ }{l_+}(\bar{t}-t^*) + \mathcal{O}(\bar{v}^{-2}),\ \tilde{I}(\tilde{\theta} - \tilde{\theta_2^*}) = \frac{\tilde{v} + \dot{l}_- }{l_-}(\tilde{t}-t^*) + \mathcal{O}(\bar{v}^{-2}).
\]
Therefore 
\begin{align*}
l_+ \bar{\tau} &= ((\bar{t}-\tilde{t}) + (\tilde{t}-t^*))(\tilde{v}+\dot{l}_- - (\dot{l}_- + \dot{l}_+)) + \mathcal{O}(\tilde{v}^{-2})\\
&= (\tilde{t}-t^*)(\tilde{v}+\dot{l}_-) + (\bar{t}-\tilde{t})(\tilde{v}+\dot{l}_-) - (\bar{t}-t^*)(\dot{l}_- + \dot{l}_+) + \mathcal{O}(\tilde{v}^{-2})\\
&= l_-\tilde{I}(\tilde{\theta} - \tilde{\theta_2^*}) 
+ \frac{l_- + l_+}{\tilde{v}} (\tilde{v}+\dot{l}_-) 
+(\dot{l}_- + \dot{l}_+)(\bar{t}-t^*) \\
&- \frac{\dot{l}_-(l_- + l_+)}{\tilde{v}} - (\bar{t}-t^*)(\dot{l}_- + \dot{l}_+) + \mathcal{O}(\tilde{v}^{-2})\\
&= l_-\tilde{I}(\tilde{\theta} - \tilde{\theta_2^*}) + l_- + l_+ + \mathcal{O}(\tilde{v}^{-2}),
\end{align*}
which gives 
$$\bar{\tau} = (l_-/l_+) \tilde{I}(\tilde{\theta} - \theta_2^*) + 1+ l_-/l_+ + \mathcal{O}(\tilde{I}^{-2}).$$
But \Cref{lemma41} implies that 
\[
\tilde{I} = I + \mathcal{O}(I^{-2}),\ \ \tilde{\theta} =\theta + \frac{2n_1}{I} + \mathcal{O}(I^{-3})
\]
hence we have 
\[
\tilde{I}(\tilde{\theta} - \theta_2^*)= \tau + 2n_1 + I(\theta_1^* - \theta_2^*) + \mathcal{O}(I^{-2})
\]
where $n_1=[\frac{I_1}{2}(\theta_2^*-\theta_1)]$.\\
We hitherto complete the proof of the formula for $P_{UU}^{12}$.
\end{proof}

\subsubsection{The Lower-Lower Chamber Case}
We present here the mirror case to Section 4.2.1, i.e. when the ball 
stays in the lower chamber both before and after it makes a jump.\\
We need the following constants $(i=1,2)$
\begin{align*}
\zeta_i^*&=\frac{2}{M_*}\int_0^{t_i^*} m(s)^{-2} ds\\
\Upsilon_i&=\frac{1}{2}\frac{m_i^+}{m_i^-}(m_i^-\dot{m}_i^+ - m_i^+\dot{m}_i^-)\\
\Upsilon_i'&=\frac{1}{8}m_i^{+2}(m_i^-\ddot{m}_i^+ - m_i^+\ddot{m}_i^-)\\
\Upsilon_i''&=\frac{1}{24}m_i^-m_i^+(m_i^-\ddot{m}_i^- - m_i^+\ddot{m}_i^+)
\end{align*}

We introduce a new pair of variables $(\rho,\mathcal{J})$ on the lower singular strips, which is the counterpart of $(\tau,\mathcal{I})$ as follows 
\[ 
\rho =
\begin{cases}
J(\zeta - \zeta_1^*) &\hbox{on $R_1^-$}\\
J(\zeta - \zeta_2^*) &\hbox{on $R_2^-$}
\end{cases} , \ 
\mathcal{J}=\frac{J}{\mathcal{M}_*} \hbox{ on $R_1^-$,$R_2^-$}
\]

\begin{prop}[Lower-Lower]\label{prop44}
Suppose that $(\rho_1,\mathcal{J}_1) \in R_1^-$, and $\mathcal{J}_1 > V_*$, and that 
$$f(t^*_2-)-f(t^*_2+) \lesssim -m_2^-\{ \mathcal{M}_*\mathcal{J}_1(\zeta_2^* - \zeta_1^*) -\rho_1 \}_2 \lesssim f(t^*_2-)+f(t^*_2+).$$ 
Then the Poincar\'e map $P_{LL}^{12} : R_1^- \to R_2^-$ is given by 
$$(\bar{\rho}_2,\bar{\mathcal{J}}_2) = G_{LL}^{12}(\rho_1,\mathcal{J}_1) + H_{LL}^{12}(\rho_1,\mathcal{J}_1)+ + \mathcal{O}(\mathcal{J}_1^{-2})$$ where
\[ 
G_{LL}^{12}(\rho_1,\mathcal{J}_1)=\left( -\frac{m_2^-}{m_2^+} \{ \mathcal{M}_*\mathcal{J}_1(\zeta_2^* - \zeta_1^*) -\rho_1 \}_2 + 1+ \frac{m_2^-}{m_2^+}, \frac{m_2^+}{m_2^-} \mathcal{J}_1 + \Upsilon_2 (\bar{\rho}_2 -1) \right)
\]
and 
\[ 
H_{LL}^{12}(\rho_1,\mathcal{J}_1) = \left(0, \Upsilon'_2 (\bar{\rho}_2 -1)^2/\mathcal{J}_1 + \Upsilon''_2/\mathcal{J}_1 \right).
\]
Similarly, suppose that $(\rho_2,\mathcal{J}_2) \in R_2^-$ and $\mathcal{J}_2 > V_*$, and that 
$$f(t^*_1-)-f(t^*_1+) \lesssim -m_1^-\{ \mathcal{M}_*\mathcal{J}_2(2+\zeta_1^* - \zeta_2^*) -\rho_2 \}_2 \lesssim f(t^*_1-)+f(t^*_1+).$$ 
Then the Poincar\'e map $P_{LL}^{21} : R_2^- \to R_1^-$ is given by 
$$(\bar{\rho}_1,\bar{\mathcal{J}}_1) = G_{LL}^{21}(\rho_2,\mathcal{J}_2) + H_{LL}^{21}(\rho_2,\mathcal{J}_2)+ + \mathcal{O}(\mathcal{J}_2^{-2})$$ 
where
\[ 
G_{LL}^{21}(\rho_2,\mathcal{J}_2)=\left( -\frac{m_1^-}{m_1^+} \{ \mathcal{M}_*\mathcal{J}_2(2+\zeta_1^* - \zeta_2^*) -\rho_2 \}_2 + 1+ \frac{m_1^-}{m_1^+}, \frac{m_1^+}{m_1^-} \mathcal{J}_2 + \Upsilon_1 (\bar{\rho}_1 -1) \right),
\]

\[ 
H_{LL}^{21}(\rho_2,\mathcal{J}_2) = \left(0, \Upsilon'_1 (\bar{\rho}_1 -1)^2/\mathcal{J}_2 + \Upsilon''_1/\mathcal{J}_2 \right),
\]
and $m_i^{\pm} = m(t_i^* \pm)$.
\end{prop}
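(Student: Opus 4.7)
The plan is to reduce the Lower-Lower case to the Upper-Upper case already treated in Proposition \ref{prop43}, exploiting the symmetry introduced in Section 4.1 via the substitution $g(t) = f(t) + 1$. Recall that with this substitution, equation \eqref{eq2} takes the same form as \eqref{eq1}, with $g$ in place of $f$, and accordingly $m(t) = 1 - g(t) = -f(t)$ plays the role of $l(t)$. By Lemma \ref{lemma42} the action-angle pair $(\zeta, J)$ behaves with respect to $m$ exactly as $(\theta, I)$ does with respect to $l$: to leading order we have $\zeta_{n+1} - \zeta_n = 2/J_n + \mathcal{O}(J_n^{-4})$ and $J_{n+1} - J_n = \mathcal{O}(J_n^{-3})$ away from singularities.

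The first step is to set up the analogous iterate count. For $(\rho_1, \mathcal{J}_1) \in R_1^-$ with $\mathcal{J}_1 > V_*$, define $n_1 = \left[\tfrac{J_1}{2}(\zeta_2^* - \zeta_1)\right]$ so that $(\tilde{t}_2, \tilde{v}_2) = F^{n_1}(t_1, v_1) \in \tilde{R}_2^-$ and $(\bar{t}_2, \bar{v}_2) = F(\tilde{t}_2, \tilde{v}_2) \in R_2^-$. The hypothesis on $\rho_1, \mathcal{J}_1$ is precisely the condition (obtained from \eqref{eq2}) that ensures the ball remains in the lower chamber after the jump. Then iterating Lemma \ref{lemma42} gives $\tilde{J} = J + \mathcal{O}(J^{-2})$ and $\tilde{\zeta} = \zeta + 2n_1/J + \mathcal{O}(J^{-3})$, which converts to $\tilde{J}(\tilde{\zeta} - \zeta_2^*) = \rho_1 + 2n_1 + J(\zeta_1^* - \zeta_2^*) + \mathcal{O}(J^{-2})$ exactly as in the Upper-Upper derivation.

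The second and main step is the local analysis near the singularity $t_2^*$. Expanding $m$, $\dot m$, $\ddot m$ to the appropriate order at $t_2^* \pm$, and using the two collision relations
\[
\bar v = \tilde v - 2\dot m(\bar t), \qquad \bar t - \tilde t = \frac{m(\bar t) + m(\tilde t)}{\bar v},
\]
(which follow from \eqref{eq7}, the $g$-analogue of \eqref{eq1}), one solves iteratively for $\bar t - \tilde t$ up to $\mathcal{O}(\tilde v^{-3})$. Substituting into $J = \tfrac{M_*}{2}(mv + m\dot m + m^2 \ddot m/(3v))$ and matching against the expansion of $\bar\rho = \bar J(\bar\zeta - \zeta_2^*) = (\bar v + \dot m_+)(\bar t - t^*)/m_+ + \mathcal{O}(\bar v^{-2})$, one obtains
\[
\bar{\mathcal{J}} = \frac{m_+}{m_-} \tilde{\mathcal{J}} + \Upsilon_2(\bar\rho - 1) + \Upsilon'_2 (\bar\rho - 1)^2/\tilde{\mathcal{J}} + \Upsilon''_2/\tilde{\mathcal{J}} + \mathcal{O}(\tilde{\mathcal{J}}^{-2}),
\]
and the corresponding identity for $\bar\rho$, precisely as in the proof of Proposition \ref{prop43} with $m$ substituted for $l$. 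The formula for $P_{LL}^{21}$ is obtained by the same argument with indices $1$ and $2$ interchanged and the angular increment $2 + \zeta_1^* - \zeta_2^*$.

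The main obstacle, though not a deep one, is bookkeeping of signs: in the lower chamber $v < 0$ and the roles of ``above'' and ``below'' are reversed, so one must verify that the algebraic identity $m = -f$ correctly propagates all signs through the implicit-function iteration, and in particular that the definitions of $\Upsilon_i, \Upsilon_i', \Upsilon_i''$ match the $\Delta_i, \Delta_i', \Delta_i''$ obtained from the Upper-Upper derivation after the substitution $l \leftrightarrow m$. Once this is verified, no new analytic ingredient is needed beyond what was used in the proof of Proposition \ref{prop43}.
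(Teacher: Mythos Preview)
Your proposal is correct and follows exactly the paper's approach: the paper gives no explicit proof of Proposition~\ref{prop44}, instead labeling it ``the mirror case to Section 4.2.1'' and relying on the substitution $g(t)=f(t)+1$, $m=1-g=-f$ introduced before Lemma~\ref{lemma42} to transplant the Upper-Upper computation wholesale. Your write-up is in fact more detailed than the paper's treatment, which simply states the formulas.
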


\subsubsection{The Upper-Lower Chamber Case}
Now we suppose that $(\tilde{t},\tilde{v}) \in \tilde{R}$ and that the ball is in the upper chamber. Also we assume that the next wall is above the previous one when the ball passes through the singularity at $t=t_*$: $f(t_*-) < f(t_*+)$. Let $(\bar{t},\bar{v})=F(\tilde{t},\tilde{v})$.\\

If $v(t_* -t) > 2-f(t)-f(t_*+)$, then the ball collides 
with the ceiling and then enters the lower chamber (c.f. \Cref{fig4} on the left).\\
Rather than resorting to the detailed computation as we have done in the constant chamber cases, we insert an imaginary stationary slit, whose length is negligible, at the height $f_*= 1-\tilde{v}(t_*-\tilde{t})+l(\tilde{t})$, so that the two consecutive collisions at the moving slits are concatenated by two fictional collisions at the imaginary wall, to which the \nameref{prop43} and \nameref{prop44} formulas readily apply.\\
\begin{figure}[!h]
  \centering
    \begin{tikzpicture}
        \draw (0,0) rectangle (3,3);
        \draw[dotted] (1.5,0)--(1.5,3);
        \draw (0,1)--(1.5,1)   (1.5,2)--(3,2);
        \path[->] (1,1.5)--(1.1,1)  (1.9,2)--(2.1,1.3);
        \draw (1.1,1)--(1.3,3)--(1.7,0)--(1.9,2);
        \draw (1.1,1) node[anchor=north]{$(\tilde{t},\tilde{v})$}
              (1.9,2) node[anchor=south]{($\bar{t},\bar{v})$};
        \filldraw (1.5,1.5) circle (1pt) node[anchor=south east]{$(t_*,v_*)$};
    \end{tikzpicture}
  \caption{The Imaginary Stationary Wall}\label{fig7}
\end{figure}
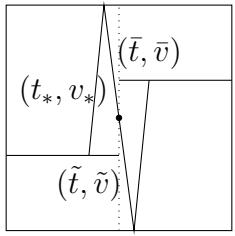

More precisely, as the ball leaves the previous slit at time $\tilde{t}$ with velocity $\tilde{v}$, it collides against the imaginary tiny slit at time $t_*$ and the outgoing velocity is still $v_*=\tilde{v}$ as the slit is stationary. Meanwhile we also imagine that the ball leaves from below the fictional slit at time $t_*$ with velocity $v_*=-\tilde{v}$ (with an abuse of notation), then it collides at the next moving slit at time $\bar{t}$ with outgoing velocity $\bar{v}$.\\
Let us denote $I_* = I(t_*,v_*)$, etc. We will need the following constants
\begin{align*}
   &\kappa_{Ii}= \frac{1}{2}m_+(\dot{m}_+ - m_+\dot{l}_-/l_-)\\
   &\kappa_{Ii}'= \frac{1}{2}m_+\dot{l}_-/l_-\\
   &\kappa_{Ii}''= \frac{1}{8}m_+\ddot{l}_-(1-\frac{1}{3}l_-^2)\\
   &\kappa_{Ii}'''= \frac{1}{4}m_+^2(\ddot{l}_- +\frac{1}{6}l_-\ddot{m}_+)\\
   &\kappa_{Ii}''''= \frac{1}{8}m_+^3\ddot{l}_-\\
   &\kappa_{Ii}'''''= \frac{1}{8}m_+^2\ddot{m}_+l_-
\end{align*}
where $i$ indicates that $l(t)$ and $m(t)$ are evaluated at $t=t_i^*$ ($i=1,2$).\\
Then the dynamics between the singular strips is captured by the following formula:

\begin{prop}[Upper-Lower I]\label{prop45}
Assume that $(\tau_1,\mathcal{I}_1) \in R_1^+$ with $\mathcal{I}_1 > V_*$ and that 
$$l_2^- \{ \mathcal{L}_*\mathcal{I}_1(\theta_2^* - \theta_1^*) -\tau_1 \}_2 \gtrsim 2-f(t^*_2+)-f(t^*_2-).$$
Then the Poincar\'e map $P_{UL\rm I}^{12}:R_1^+ \to R_2^-$ is given by 
$$(\bar{\rho}_2,\bar{\mathcal{J}}_2)=G_{UL\rm I}^{12}(\tau_1,\mathcal{I}_1) + H_{UL\rm I}^{12}(\tau_1,\mathcal{I}_1) + \mathcal{O}(\mathcal{I}_1^{-2})$$ where
\begin{align*} 
G_{UL\rm I}^{12}(\tau_1,\mathcal{I}_1)= \bigg( & \frac{l_2^-}{m_2^+}\{ \mathcal{L}_*\mathcal{I}_1(\theta_2^* - \theta_1^*) -\tau_1 \}_2 + \frac{m_2^+ - l_2^- -1}{m_2^+},\\
&-\frac{m_2^+}{l_2^-}\mathcal{I}_1 + \kappa_{I2}(\bar{\rho}_2-1) - \kappa_{I2}' \bigg)
\end{align*}
and
\[
H_{UL\rm I}^{12}(\tau_1,\mathcal{I}_1)=\left(0 , \frac{\kappa_{I2}''}{\mathcal{I}_1} +\kappa_{I2}'''\frac{\bar{\rho}_2-1}{I_1} +\kappa_{I2}''''\frac{(\bar{\rho}_2-1)^2}{\mathcal{I}_1} -\kappa_{I2}'''''\frac{(\bar{\rho}_2-1)^3}{\mathcal{I}_1} \right).
\]
Similarly, assume that $(\tau_2,\mathcal{I}_2) \in R_2^+$ with $\mathcal{I}_2 > V_*$ and that 
$$l_1^- \{ \mathcal{L}_*\mathcal{I}_2(2+\theta_1^* - \theta_2^*) -\tau_2 \}_2 \gtrsim 2-f(t^*_1+)-f(t^*_1-).$$ 
Then the Poincar\'e map $P_{UL\rm I}^{21}:R_2^+ \to R_1^-$ is given by 
$$(\bar{\rho}_1,\bar{\mathcal{J}}_1)=G_{UL\rm I}^{21}(\tau_2,\mathcal{I}_2) + H_{UL\rm I}^{21}(\tau_2,\mathcal{I}_2) + \mathcal{O}(\mathcal{I}_2^{-2})$$ where
\begin{align*} 
G_{UL\rm I}^{21}(\tau_2,\mathcal{I}_2)= \bigg( & \frac{l_1^-}{m_1^+}\{ \mathcal{L}_*\mathcal{I}_2(2+\theta_1^* - \theta_2^*) -\tau_2 \}_2 + \frac{m_1^+ - l_1^- -1}{m_1^+},\\
&-\frac{m_1^+}{l_1^-}\mathcal{I}_2 + \kappa_{I1}(\bar{\rho}_1-1) - \kappa_{I1}' \bigg)
\end{align*}
and
\[
H_{UL\rm I}^{21}(\tau_2,\mathcal{I}_2)=\left(0 , \frac{\kappa_{I1}''}{\mathcal{I}_2} +\kappa_{I1}'''\frac{\bar{\rho}_1-1}{\mathcal{I}_2} +\kappa_{I1}''''\frac{(\bar{\rho}_1-1)^2}{\mathcal{I}_2} -\kappa_{I1}'''''\frac{(\bar{\rho}_1-1)^3}{\mathcal{I}_2} \right).
\]
\end{prop}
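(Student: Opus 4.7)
The plan is to exploit the imaginary stationary slit decomposition introduced in the paragraph preceding the proposition. I would split the Poincar\'e map $P_{UL\rm I}^{12}:R_1^+\to R_2^-$ into two concatenated single-step maps: Piece~1 is an Upper-Upper step from the last upper-chamber collision $(\tilde t,\tilde v)$ with the real moving slit to a fictitious collision at the stationary imaginary slit placed at $(t_*,v_*=\tilde v)$ at height $f_*=1-\tilde v(t_*-\tilde t)+l(\tilde t)$; Piece~2 is a Lower-Lower step from the same imaginary slit, now traversed in the opposite direction with velocity $-v_*$, to the first lower-chamber collision $(\bar t,\bar v)$ with the real moving slit after the singularity. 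With this decomposition $P_{UL\rm I}^{12}$ is recovered by composing the two single-step maps already analyzed in Propositions~\ref{prop43} and~\ref{prop44}.

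First I would apply Proposition~\ref{prop43} to Piece~1 with the imaginary slit playing the role of the ``$+$'' side. Because that slit is stationary we have $\dot l_+=\ddot l_+=0$ and $l_+=1-f_*$, so the Upper-Upper output for $(\tau_*,\mathcal I_*)$ in terms of $(\tau_1,\mathcal I_1)$ collapses to a clean expression in which only $l_-=l(t_2^*-)$ and $\dot l_-$, $\ddot l_-$ survive as local data at $t_2^*$, while the ``$+$'' side enters only through the $\tilde v$-dependent quantity $1-f_*$. Next I would apply Proposition~\ref{prop44} to Piece~2 with the imaginary slit as the ``$-$'' side, so that now $\dot m_-=\ddot m_-=0$ and $m_-=-f_*$; the real moving slit contributes $m_+=m(t_2^*+)=-f(t_2^*+)$ together with $\dot m_+$ and $\ddot m_+$.

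Composing the two simplified maps eliminates the auxiliary slit: the leading coefficient $-m_2^+/l_2^-$ of $\mathcal I_1$ in $\bar{\mathcal J}_2$ emerges as the product of the Piece~1 coefficient $l_+/l_-$ and the Piece~2 coefficient $m_+/m_-$ once the $f_*$-dependent factors $l_+=1-f_*$ and $m_-=-f_*$ are eliminated using the identity on $R_1^+$ that relates $\tilde v(t_2^*-\tilde t)$ (and hence $f_*$) to $\tau_1$ and $\mathcal I_1$ through the action-angle description of Lemma~\ref{lemma41}. The affine coefficient $l_2^-/m_2^+$ in front of $\{\cdot\}_2$ and the constant $(m_2^+-l_2^--1)/m_2^+$ in $\bar\rho_2$ follow by the same substitution. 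The leading corrections $\kappa_{I2}$ and $\kappa_{I2}'$ in $\bar{\mathcal J}_2$ come from combining the one-sided derivatives $\dot m_+$ and $\dot l_-$ through the composition, while the five sub-leading constants at order $1/\mathcal I_1$ arise from the quadratic and cubic Taylor terms involving $\ddot l_-$ and $\ddot m_+$ contributed by the two pieces.

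The main technical obstacle will be the bookkeeping of the $1/\mathcal I_1$ corrections: one must check that the remainders $\mathcal O(\mathcal I_*^{-2})$ of each piece propagate to $\mathcal O(\mathcal I_1^{-2})$ after composition, that the cross-term between the affine shifts of Pieces~1 and~2 yields exactly $\kappa_{I2}'''(\bar\rho_2-1)/\mathcal I_1$, and that the combined quadratic and cubic corrections produce $\kappa_{I2}''''(\bar\rho_2-1)^2/\mathcal I_1$ and $-\kappa_{I2}'''''(\bar\rho_2-1)^3/\mathcal I_1$ with the signs and coefficients stated. The formula for $P_{UL\rm I}^{21}$ then follows by the identical argument with the roles of $t_1^*$ and $t_2^*$ interchanged.
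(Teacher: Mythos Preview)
Your proposal is correct and follows essentially the same route as the paper: the paper also inserts the imaginary stationary slit at $t_*$, applies the Upper-Upper single-step formula from the proof of Proposition~\ref{prop43} to Piece~1 (with the stationary ``$+$'' side, so $\tau_*=0$ and $I_*=\tfrac{\mathcal L_*}{2}l_*\tilde v$) and the Lower-Lower single-step formula from the proof of Proposition~\ref{prop44} to Piece~2 (with the stationary ``$-$'' side, so $\rho_*=0$), and then eliminates $f_*$ via the identity $m_*=l_*-1$ together with the expansions $l_*=-l_-\tilde I(\tilde\theta-\theta_2^*)-l_-+\mathcal O(v^{-2})$ and $m_*=m_+(\bar\rho-1)+\mathcal O(v^{-2})$. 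The only cosmetic difference is that the paper invokes the \emph{computations in the proofs} of Propositions~\ref{prop43} and~\ref{prop44} (the local single-jump expansions) rather than the Poincar\'e-map statements themselves, since the imaginary slit is not one of the actual singular strips; your plan implicitly does the same.
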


\begin{proof}
We present the proof of $P_{UL\rm I}^{12}.$ 
The formula for $P_{UL\rm I}^{21}$ can be obtained similarly. We suppress the sub/superscripts whenever they are clear from the context.\\
We imagine that the ball collides at the fictional stationary wall at time $t_*$ with outgoing velocity $v_*=\tilde{v}$. Then $l_* = 1-f_* = \tilde{v}(t_*-\tilde{t})-l(\tilde{t})$ and we have 
\[
I_* = \frac{L_*}{2}l_*\tilde{v}, \ \tau_* = 0.
\]
From the \nameref{prop43} formula, 
\[
\tau_* = \frac{l_-}{l_*}\tilde{I}(\tilde{\theta}-\theta_2^*) + 1 + \frac{l_-}{l_*} + \mathcal{O}(v^{-2})
\]
\[
I_* = \frac{l_*}{l_-}\tilde{I} + \frac{L_*l_*^2}{2l_-}\dot{l}_- -\frac{L_*^2l_*^3\ddot{l}_-}{8\tilde{I}} + \frac{L_*^2l_*^2l_*\ddot{l}_-}{24\tilde{I}} + \mathcal{O}(v^{-2}).
\]
Now we imagine that the ball leaves from below the fictional stationary wall at time $t_*$ with outgoing velocity $v_*=-\tilde{v}$ (with an abuse of notation). Then $m_*= -m(\bar{t}) - v(\bar{t}-t_*)$ and we have 
\[
J_* = -\frac{M_*}{2}m_*\tilde{v}, \ \rho_*=0.
\]
From the \nameref{prop44} formula, 
\[
\bar{\rho} = 1+\frac{m_*}{m_+} + \mathcal{O}(v^{-2})
\]
\[
\bar{J} = \frac{m_+}{m_*}J_* + \frac{M_*}{2}m_+\dot{m}_+(\bar{\rho}-1) + \frac{M_*^2}{8J_*}m_+^2m_*\ddot{m}_+(\bar{\rho}-1)^2 - \frac{M_*^2}{24J_*}m_+^2m_*\ddot{m}_+ + \mathcal{O}(v^{-2}).
\]
We observe that 
\begin{align*}
 l_* &= v(t_*-\tilde{t})-l(\tilde{t})\\
     &= v(t_*-\tilde{t})- l_- - \dot{l}_-(\tilde{t}-t_*) + \mathcal{O}(v^{-2})\\
     &= -(\tilde{v}+\dot{l}_-)(t_*-\tilde{t})- l_- + \mathcal{O}(v^{-2})\\
     &=-l_-\tilde{I}(\tilde{\theta}-\theta_2^*) - l_- + \mathcal{O}(v^{-2})
\end{align*}
and that 
\begin{align*}
 m_* &= -m(\bar{t}) - v(\bar{t}-t_*)\\
     &= -m_+ - \dot{m}_+(\bar{t}-t_*) + (\bar{v}+2\dot{m}_+)(\bar{t}-t_*) + \mathcal{O}(v^{-2})\\
     &= -m_+ + (\bar{v}+\dot{m}_+)(\bar{t}-t_*) + \mathcal{O}(v^{-2})\\
     &= -m_+ +m_+\bar{\rho} + \mathcal{O}(v^{-2})
\end{align*}
Since $m_* = l_* -1$, 
\[
\bar{\rho} = -\frac{l_-}{m_+}\tilde{I}(\tilde{\theta}-\theta_2^*) + \frac{m_+ - l_- -1}{m_+} + \mathcal{O}(v^{-2}).
\]
Finally the relation between $I_*$ and $J_*$, together with \Cref{lemma41}, produces the formula for $\bar{J}$.
\end{proof}

If $v(t_* -t) < f(t_*+) - f(t)$, then the ball enters the lower chamber immediately after it leaves the previous slit (c.f. \Cref{fig3} on the right).\\
The imaginary wall trick no longer applies, so we have to return to the direct computation.\\
We will need the following constants
\begin{align*}
&\kappa_{\rm II i}''= \frac{1}{4}m_+^2\ddot{l}_-\\
&\kappa_{\rm II i}'''= \frac{1}{8}m_+^2(l_-\ddot{m}_+ - m_+\ddot{l}_-)\\
&\kappa_{\rm II i}''''= \frac{1}{24}m_+l_-(l_-^2\ddot{l}_- - l_-m_+\ddot{m}_+ - 3\ddot{l}_-)
\end{align*}
where $i$ indicates that $l(t)$ and $m(t)$ are evaluated at $t=t_i^*$ ($i=1,2$).

\begin{prop}[Upper-Lower \rm II]\label{prop46}
Assume that $(\tau_1,\mathcal{I}_1) \in R_1^+$ with $\mathcal{I}_1 > V_*$ and that 
$$f(t^*_2+)-f(t^*_2-) \gtrsim l_2^- \{ \mathcal{L}_*\mathcal{I}_1(\theta_2^* - \theta_1^*) -\tau_1 \}_2.$$ 
Then the Poincar\'e map $P_{UL\rm II}^{12}:R_1^+ \to R_2^-$ is given by 
$$(\bar{\rho}_2,\bar{\mathcal{J}}_2)=G_{UL\rm II}^{12}(\tau_1,\mathcal{I}_1) + H_{UL\rm II}^{12}(\tau_1,\mathcal{I}_1) + \mathcal{O}(\mathcal{I}_1^{-2})$$ where
\begin{align*}
G_{UL\rm II}^{12}(\tau_1,\mathcal{I}_1)=\bigg( &\frac{l_2^-}{m_2^+}\{ \mathcal{L}_*\mathcal{I}_1(\theta_2^* - \theta_1^*) -\tau_1 \}_2 + \frac{m_2^+ - l_2^- +1}{m_2^+},\\
&-\frac{m_2^+}{l_2^-}\mathcal{\mathcal{I}}_1 + \kappa_{\rm I 2}(\bar{\rho}_2-1) + \kappa_{\rm I 2}' \bigg)
\end{align*}
and
\[
H_{UL\rm II}^{12}(\tau_1,\mathcal{I}_1)=\left(0 , -\kappa_{\rm II 2}''\frac{\bar{\rho}_2-1}{\mathcal{I}_1} -\kappa_{\rm II 2}'''\frac{(\bar{\rho}_2-1)^2}{\mathcal{I}_1} -\frac{\kappa_{\rm II 2}''''}{\mathcal{I}_1} \right).
\]
Similarly, assume that $(\tau_2,\mathcal{I}_2) \in R_2^+$ with $\mathcal{I}_2 > V_*$ and that 
$$f(t^*_1+)-f(t^*_1-) \gtrsim l_1^- \{ \mathcal{L}_*\mathcal{I}_2(2+\theta_1^* - \theta_2^*) -\tau_2 \}_2.$$ 
Then the Poincar\'e map $P_{UL\rm II}^{21}:R_2^+ \to R_1^-$ is given by 
$$(\bar{\rho}_1,\bar{\mathcal{J}}_1)=G_{UL\rm II}^{21}(\tau_2,\mathcal{I}_2) + H_{UL\rm II}^{21}(\tau_2,\mathcal{I}_2) + \mathcal{O}(\mathcal{I}_2^{-2})$$ where
\begin{align*}
G_{UL\rm II}^{21}(\tau_2,\mathcal{I}_2)=\bigg( &\frac{l_1^-}{m_1^+}\{ \mathcal{L}_*\mathcal{I}_2(2+\theta_1^* - \theta_2^*) -\tau_2 \}_2 + \frac{m_1^+ - l_1^- +1}{m_1^+},\\
&-\frac{m_1^+}{l_1^-}\mathcal{I}_2 + \kappa_{\rm I 1}(\bar{\rho}_1-1) + \kappa_{\rm I 1}' \bigg)
\end{align*}
and
\[
H_{UL\rm II}^{21}(\tau_2,\mathcal{I}_2)=\left(0 , -\kappa_{\rm II 1}''\frac{\bar{\rho}_1-1}{\mathcal{I}_2} -\kappa_{\rm II 1}'''\frac{(\bar{\rho}_1-1)^2}{\mathcal{I}_2} -\frac{\kappa_{\rm II 1}''''}{\mathcal{I}_2} \right).
\]
\end{prop}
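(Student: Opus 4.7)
The proof follows the template of Propositions \ref{prop43} and \ref{prop45}, but with a crucial difference: the imaginary-wall reduction used in Proposition \ref{prop45} is unavailable in the UL II regime, since the ball leaves the first slit, passes directly through the singularity $t_2^*$, and impacts the second slit from below in a single free flight without any intermediate bounce. Consequently we must apply the switching equation \Cref{eq4} directly. I describe the derivation of $P_{UL\rm II}^{12}$; the map $P_{UL\rm II}^{21}$ follows by relabelling.

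The first step is to transport $(\tau_1, \mathcal{I}_1) \in R_1^+$ forward through $n_1 = [\frac{I_1}{2}(\theta_2^* - \theta_1)]$ iterates of the smooth normal form of Lemma \ref{lemma41}, exactly as in the proof of Proposition \ref{prop43}. This yields $(\tilde{t}_2, \tilde{v}_2) \in \tilde{R}_2^+$ with $\tilde{I}_2(\tilde{\theta}_2 - \theta_2^*) = \tau_1 + 2n_1 + I_1(\theta_1^* - \theta_2^*) + \mathcal{O}(I_1^{-2})$, identified modulo $2$ (up to sign) with $\{\mathcal{L}_* \mathcal{I}_1(\theta_2^* - \theta_1^*) - \tau_1\}_2$. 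I then substitute $(\tilde{t}_2, \tilde{v}_2)$ into \Cref{eq4}. Taylor-expanding $f$ to second order on either side of $t_2^*$ and writing $A = \tilde{v}_2(\bar{t}_2 - t_2^*)$, $B = \tilde{v}_2(t_2^* - \tilde{t}_2)$ (both positive and of order $1$), the first line of \Cref{eq4} becomes
\begin{equation*}
A + B = (f_2^+ - f_2^-) + \tilde{v}_2^{-1}\bigl(\dot{f}_2^+ A + \dot{f}_2^- B\bigr) + \tfrac{1}{2}\tilde{v}_2^{-2}\bigl(\ddot{f}_2^+ A^2 - \ddot{f}_2^- B^2\bigr) + \mathcal{O}(\tilde{v}_2^{-3}),
\end{equation*}
which I solve iteratively. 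Expanding $\tau = I \cdot \frac{2}{\mathcal{L}_*}\int_0^t l(s)^{-2}ds$ near $t_2^*$ gives $B = -l_2^- \tilde{\tau}_2 + \mathcal{O}(\tilde{v}_2^{-1})$, and the analogous expansion from Lemma \ref{lemma42} gives $A = -m_2^+ \bar{\rho}_2 + \mathcal{O}(\tilde{v}_2^{-1})$. Using $f_2^- = 1 - l_2^-$ and $f_2^+ = -m_2^+$, one reads off the leading-order formula for $\bar{\rho}_2$; the constant term $(m_2^+ - l_2^- + 1)/m_2^+$ differs from its UL I analogue by the sign of the $1$, reflecting the absent ceiling bounce.

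The formula for $\bar{\mathcal{J}}_2$ is then extracted from $\bar{v}_2 = -\tilde{v}_2 + 2\dot{f}(\bar{t}_2)$ combined with $\bar{J}_2 = \frac{M_*}{2}\bigl(m\bar{v} + m\dot{m} + m^2\ddot{m}/(3\bar{v})\bigr)$ evaluated at $\bar{t}_2$, after Taylor-expanding $m, \dot{m}, \ddot{m}$ around $t_2^*+$ and re-expressing $\tilde{v}_2$ through $\mathcal{I}_1$. The leading term $-(m_2^+/l_2^-)\mathcal{I}_1$ comes from $m_2^+ \bar{v}_2 \approx -m_2^+ \tilde{v}_2$; the corrections $\kappa_{I2}(\bar{\rho}_2 - 1) + \kappa_{I2}'$ collect the $\dot{m}_2^+, \dot{l}_2^-$ terms from the linear part of the Taylor expansion; and the entries of $H_{UL\rm II}^{12}$ collect the $\ddot{m}_2^+, \ddot{l}_2^-$ contributions together with the next-order corrections from the implicit inversion. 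The main technical obstacle is the simultaneous bookkeeping of these cross-terms: because the transition cannot be split into two smaller UU/LL applications as in Proposition \ref{prop45}, the $l$- and $m$-dependent expansions must be balanced in parallel, which is the source of the combination $(l_2^-)^2 \ddot{l}_2^- - l_2^- m_2^+ \ddot{m}_2^+ - 3\ddot{l}_2^-$ appearing in $\kappa_{\rm II 2}''''$.
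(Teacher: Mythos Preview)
Your proposal is correct and follows essentially the same approach as the paper's own proof: both observe that the imaginary-wall trick of Proposition~\ref{prop45} is unavailable here, fall back on a direct Taylor expansion of \Cref{eq4} across the singularity $t_2^*$, extract $\bar\rho$ first from the flight-time relation, and then obtain $\bar{\mathcal J}$ by expanding the action $\bar J=\frac{M_*}{2}(m\bar v+m\dot m+m^2\ddot m/3\bar v)$ and eliminating $\tilde v$ through $\tilde I$. Your intermediate variables $A,B$ are simply a repackaging of the paper's $(\bar t-t^*)\bar v$ and $(\tilde t-t^*)\tilde v$; the bookkeeping of the mixed $l$- and $m$-expansions that you flag as the main obstacle is exactly what the paper carries out line by line.
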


\begin{proof}
Again we only prove the formula for $P_{UL\rm II}^{12}$.\\
We have from \Cref{eq4} that
\begin{align*}
 v(\bar{t}-\tilde{t}) &= f(\bar{t}) -f(\tilde{t})\\
 \implies \tilde{v}( (\bar{t}-t_*) - (\tilde{t}-t_*) ) &= -m(\bar{t}) + l(\tilde{t}) -1\\
 \implies m_+ - (\bar{v}+\dot{m}_+)(\bar{t}-t_*) &= (\tilde{v}+\dot{l}_-)(\tilde{t}-t_*) + l_- -1 + \mathcal{O}(v^{-2})\\
 \implies m_+ - m_+ \bar{\rho} &= l_-\tilde{I}(\tilde{\theta}-\theta_2^*) +l_- -1  + \mathcal{O}(v^{-2})\\
 \implies \bar{\rho} &= -\frac{l_-}{m_+}\tilde{I}(\tilde{\theta}-\theta_2^*) + \frac{m_+ -l_- +1}{m_+} + \mathcal{O}(v^{-2})
\end{align*}
The computation is similar to that in the proof of Proposition \ref{prop43}. So we just list the key steps.\\
We observe that 
\begin{align*}
2\mathcal{M}_*^{-1}\bar{J} &= \bar{m}\bar{v} + \bar{m}\dot{\bar{m}}+\frac{\bar{m}^2\ddot{\bar{m}}}{3\bar{v}}\\
&=m_+\bar{v}+m_+\dot{m}_+ +m_+\dot{m}_+\bar{\rho} + m_+\ddot{m}_+(\bar{t}-t_*) + \frac{m_+^2\ddot{m}_+}{3\bar{v}}\\
&\ \ \ \ \ + \frac{\ddot{m}_+}{2}(\bar{t}-t_*)^2\bar{v} + \mathcal{O}(v^{-2})
\end{align*}
and that
\begin{align*}
2\mathcal{L}_*^{-1}\tilde{I} &= \tilde{l}\tilde{v} + \tilde{l}\dot{\tilde{l}}+\frac{\tilde{l}^2\ddot{\tilde{l}}}{3\tilde{v}}\\
&=l_- \tilde{v} + l_- \dot{l}_- + l_-\dot{l}_-\tilde{I}(\tilde{\theta}-\theta_2^*) + l_-\ddot{l}_-(\tilde{t}-t_*) + \frac{l_-^2\ddot{l}_-}{3\tilde{v}}\\
&\ \ \ \ \ + \frac{\ddot{l}_-}{2}(\tilde{t}-t_*)^2\tilde{v} + \mathcal{O}(v^{-2}).
\end{align*}
We also note from \Cref{eq4} that 
\[
\tilde{v}=-\bar{v}-2\dot{m}_+ -2\ddot{m}_+(\bar{t}-t_*) + \mathcal{O}(v^{-2})
\]
and that 
\begin{align*}
\bar{t}-\tilde{t} &= \frac{f(\bar{t})-f(\tilde{t})}{\tilde{v}} = \frac{-m(\bar{t}) + l(\tilde{t}) -1}{\tilde{v}}\\
&=\frac{-m_+ + l_- -1}{\tilde{v}} + (-\dot{m}_+ +\dot{l}_-)\frac{\bar{t}-t_*}{\tilde{v}} - \dot{l}_-\frac{\bar{t}-\tilde{t}}{\tilde{v}} + \mathcal{O}(v^{-3})\\
&=\frac{-m_+ + l_- -1}{\tilde{v}} + (-\dot{m}_+ +\dot{l}_-)\frac{\bar{t}-t_*}{\tilde{v}} + \dot{l}_-\frac{m_+ - l_- +1}{\tilde{v}^2} + \mathcal{O}(v^{-3}).
\end{align*}
Therefore 
\begin{align*}
2\mathcal{L}_*^{-1}\tilde{I} &= -l_-\bar{v}-2l_-\dot{m}_+ +m_+\dot{l}_- + m_+\dot{l}_-\bar{\rho} +\dot{l}_-\\
&\ \ \ \ \ + (m_+\ddot{l}_- -2l_-\ddot{m}_+ + \ddot{l}_-)(\bar{t}-t_*)- \frac{\ddot{l}_-}{2}(\bar{t}-t_*)^2\bar{v}\\
&\ \ \ \ \  -\frac{\ddot{l}_-}{2\bar{v}}\left((m_+ +1)^2 - \frac{l_-^2}{3} \right) +\mathcal{O}(v^{-2})
\end{align*}
hence 
\begin{align*}
\frac{2l_-}{\mathcal{M}_*}\bar{J} + \frac{2m_+}{\mathcal{L}_*}\tilde{I} 
&= m_+\dot{l}_- + m_+(l_-\dot{m}_+ -m_+\dot{l}_-)(\bar{\rho}-1) -\frac{1}{2}\mathcal{L}_*l_-m_+^2\ddot{l}_-\frac{\bar{\rho}-1}{I}\\
&\ \ \ \ \ -\frac{1}{4}\mathcal{L}_*l_-m_+^2(l_-\ddot{m}_+ - m_+\ddot{l}_-)\frac{(\bar{\rho}-1)^2}{I}\\
&\ \ \ \ \ -\frac{1}{12I}\mathcal{L}_*l_-m_+(l_-^2\ddot{l}_- - l_-m_+\ddot{m}_+ -3\ddot{l}_-) +\mathcal{O}(v^{-2})
\end{align*}
which produces the desired formula together with \Cref{lemma41}.
\end{proof}

\subsubsection{The Lower-Upper Chamber Case}
Finally we suppose that $(\tilde{t},\tilde{v}) \in \tilde{R}$ and that the ball is in the lower chamber. Also we assume that the next wall is below the previous one when the ball passes through the singularity at $t=t_*$: $f(t_*-) > f(t_*+)$. Again let $(\bar{t},\bar{v})=F(\tilde{t},\tilde{v})$.\\

If $-v(t_* -t) > f(t)+f(t_*-)$, then the ball collides
with the floor and enters the upper chamber (c.f. \Cref{fig4} on the left).\\
The imaginary stationary wall trick also applies in this case, which produces a desired formula with the following constants
\begin{align*}
   &\chi_{Ii}= \frac{1}{2}l_+(\dot{l}_+ - l_+\dot{m}_-/m_-)\\
   &\chi_{Ii}'= \frac{1}{2}l_+\dot{m}_-/m_-\\
   &\chi_{Ii}''= \frac{1}{8}l_+\ddot{m}_-(1-\frac{1}{3}m_-^2)\\
   &\chi_{Ii}'''= \frac{1}{4}l_+^2(\frac{1}{6}m_-\ddot{l}_+ -\ddot{m}_- )\\
   &\chi_{Ii}''''= \frac{1}{8}l_+^3\ddot{m}_-\\
   &\chi_{Ii}'''''= \frac{1}{8}l_+^2\ddot{l}_+m_-
\end{align*}
where $i$ indicates that $l(t)$ and $m(t)$ are evaluated at $t=t_i^*$ ($i=1,2$).

\begin{prop}[Lower-Upper I]\label{prop47}
Assume that $(\rho_1,\mathcal{J}_1) \in R_1^-$ with $\mathcal{J}_1 > V_*$ and that $-m_2^-\{ \mathcal{M}_*\mathcal{J}_1(\zeta_2^* - \zeta_1^*) -\rho_1 \}_2 \gtrsim f(t^*_2-)+f(t^*_2+)$. Then the Poincar\'e map 
$P_{LU\rm I}^{12}: R_1^- \to R_2^+$ is given by $$(\bar{\tau}_2,\bar{\mathcal{I}}_2)=G_{LU\rm I}^{12}(\rho_1,\mathcal{J}_1) + H_{LU\rm I}^{12}(\rho_1,\mathcal{J}_1) + \mathcal{O}(\mathcal{J}_1^{-2})$$ 
where 
\begin{align*} 
G_{LU\rm I}^{12}(\rho_1,\mathcal{J}_1)=\bigg( &\frac{m_2^-}{l_2^+}\{ \mathcal{M}_*\mathcal{J}_1(\zeta_2^* - \zeta_1^*) -\rho_1 \}_2 + \frac{l_2^+ - m_2^- +1}{l_2^+},\\
&-\frac{l_2^+}{m_2^-}\mathcal{J}_1 + \chi_{I2}(\bar{\tau}_2-1) + \chi_{I2}' \bigg)
\end{align*}
and
\[
H_{LU\rm I}^{12}(\rho_1,\mathcal{J}_1)=\left(0 , \frac{\chi_{I2}''}{\mathcal{J}_1} +\chi_{I2}'''\frac{\bar{\tau}_2-1}{\mathcal{J}_1} +\chi_{I2}''''\frac{(\bar{\tau}_2-1)^2}{\mathcal{J}_1} -\chi_{I2}'''''\frac{(\bar{\tau}_2-1)^3}{\mathcal{J}_1} \right).
\]
Similarly, assume that $(\rho_2,\mathcal{J}_2) \in R_2^-$ with $\mathcal{J}_2 > V_*$ and that 
$$-m_1^-\{ \mathcal{M}_*\mathcal{J}_2(2+\zeta_1^* - \zeta_2^*) -\rho_1 \}_2 \gtrsim f(t^*_1-)+f(t^*_1+).$$ 
Then the Poincar\'e map $P_{LU\rm I}^{21}: R_2^- \to R_1^+$ is given by 
$$(\bar{\tau}_1,\bar{\mathcal{I}}_1)=G_{LU\rm I}^{21}(\rho_2,\mathcal{J}_2) + H_{LU\rm I}^{21}(\rho_2,\mathcal{J}_2) + \mathcal{O}(\mathcal{J}_2^{-2})$$ 
where 
\begin{align*} 
G_{LU\rm I}^{21}(\rho_2,\mathcal{J}_2)=\bigg( &\frac{m_1^-}{l_1^+}\{ \mathcal{M}_*\mathcal{J}_2(2+\zeta_1^* - \zeta_2^*) -\rho_2 \}_2 + \frac{l_1^+ - m_1^- +1}{l_1^+},\\
&\frac{l_1^+}{m_1^-}\mathcal{J}_2 + \chi_{I1}(\bar{\tau}_1-1) + \chi_{I1}' \bigg)
\end{align*}
and
\[
H_{LU\rm I}^{21}(\rho_2,\mathcal{J}_2)=\left(0 , \frac{\chi_{I1}''}{\mathcal{J}_2} +\chi_{I1}'''\frac{\bar{\tau}_1-1}{\mathcal{J}_2} +\chi_{I1}''''\frac{(\bar{\tau}_1-1)^2}{\mathcal{J}_2} -\chi_{I1}'''''\frac{(\bar{\tau}_1-1)^3}{\mathcal{J}_2} \right).
\]
\end{prop}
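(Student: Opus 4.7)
The plan is to mirror the proof of Proposition \ref{prop45} with the roles of the upper and lower chambers exchanged. The ball starts in the lower chamber at $(\tilde t, \tilde v) \in \tilde R$ with $\tilde v < 0$, bounces off the floor while crossing the singularity at $t_2^*$ (where $f(t_2^*-) > f(t_2^*+)$), and lands in the upper chamber at $(\bar t, \bar v)$. I will insert a stationary imaginary slit of negligible length at the height
\[
 f_* \;=\; -\tilde v(t_2^* - \tilde t) - f(\tilde t)
\]
that the ball occupies at the singularity time, so the trajectory decomposes into a lower-chamber segment from the real slit at $\tilde t$ to the imaginary slit, followed by an upper-chamber segment from the imaginary slit to the next real slit at $\bar t$.

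To the first segment I apply the \nameref{prop44} formula with the imaginary slit as its terminal slit, reading off $\rho_* = 0$ and $J_* = \tfrac{\mathcal{M}_*}{2} m_* \tilde v$ from the stationary-wall geometry. To the second segment I apply the \nameref{prop43} formula with the imaginary slit as its initial slit, reading off $\tau_* = 0$ and $I_* = -\tfrac{\mathcal{L}_*}{2} l_* \tilde v$, where the minus sign encodes the abuse of notation from the proof of Proposition \ref{prop45}: the outgoing velocity at the imaginary wall is taken to be $-\tilde v$ in order to conform to the upper-chamber convention. Short Taylor expansions around $t_2^*$ then yield
\begin{align*}
 m_* &= -m_2^- \tilde J(\tilde\zeta - \zeta_2^*) - m_2^- + \mathcal{O}(v^{-2}),\\
 l_* &= -l_2^+ + l_2^+ \bar\tau_2 + \mathcal{O}(v^{-2}),
\end{align*}
and the identity $l(t) - m(t) \equiv 1$ (which holds at the virtual wall as well) closes the system and produces the leading-order angular component of $G_{LU\rm I}^{12}$. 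The action component $\bar{\mathcal I}_2$ and the remainder $H_{LU\rm I}^{12}$ then follow by carrying the $\mathcal{O}(v^{-1})$ corrections of both \nameref{prop43} and \nameref{prop44} through the same substitution and collecting like terms, producing the six constants $\chi_{I2}, \chi_{I2}', \chi_{I2}'', \chi_{I2}''', \chi_{I2}'''', \chi_{I2}'''''$ defined just above the proposition. The second half of the statement, concerning $P_{LU\rm I}^{21}$, is obtained identically after relabelling $1 \leftrightarrow 2$ and replacing $\zeta_2^* - \zeta_1^*$ with $2 + \zeta_1^* - \zeta_2^*$, exactly as in the analogous step of Proposition \ref{prop45}.

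The main obstacle is bookkeeping rather than anything conceptual. One must track the sign flips introduced by the velocity reflection at the virtual wall, by the differing sign conventions for $v$ in the two chambers (positive for upper, negative for lower), and by the asymmetric relation $l - m \equiv 1$ (rather than a clean antisymmetry). In particular, the constants $\chi_{I2}^{(k)}$ cannot simply be read off from the $\kappa_{I2}^{(k)}$ of Proposition \ref{prop45} by the naive swap $l \leftrightarrow m$; each must be verified independently by Taylor expanding $l$ at $t_2^*+$ and $m$ at $t_2^*-$ to second order in $1/\tilde v$. Once the sign conventions are fixed at the outset, however, the remaining verification is routine and structurally identical to the proof of Proposition \ref{prop45}.
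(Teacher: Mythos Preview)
Your proposal is correct and follows exactly the approach the paper indicates: the text preceding Proposition~\ref{prop47} explicitly states that ``the imaginary stationary wall trick also applies in this case,'' and your outline---applying the \nameref{prop44} formula to the lower segment up to the fictitious wall at $t_2^*$, then the \nameref{prop43} formula to the upper segment, and closing with $l_*-m_*=1$---is precisely the mirror of the proof of Proposition~\ref{prop45}. The paper gives no further detail beyond listing the constants $\chi_{Ii}^{(k)}$, so your sketch is in fact more explicit than the paper's own treatment.
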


Next, if $-v(t_* -t)<f(t) - f(t_*-)$, then the ball enters the lower chamber immediately after it leaves the previous slit (c.f. \Cref{fig5} on the right).\\
The computation as we have performed for Proposition \ref{prop46} can be reproduced here to present the formula in this case, the proof of which we ergo omit. We will need the following constants
\begin{align*}
&\chi_{\rm II i}''= \frac{1}{4}l_+^2\ddot{m}_-\\
&\chi_{\rm II i}'''= \frac{1}{8}l_+^2(m_-\ddot{l}_+ - l_+\ddot{m}_-)\\
&\chi_{\rm II i}''''= \frac{1}{24}l_+m_-(m_-^2\ddot{m}_- - m_-l_+\ddot{l}_+ - 3\ddot{m}_-)
\end{align*}
where $i$ indicates that $l(t)$ and $m(t)$ are evaluated at $t=t_i^*$ ($i=1,2$).

\begin{prop}[Lower-Upper \rm II]\label{prop48}
Assume that $(\rho_1,\mathcal{J}_1) \in R_1^-$ with $\mathcal{J}_1 > V_*$ and that $f(t^*_2-)-f(t^*_2+) \gtrsim -m_2^-\{ \mathcal{M}_*\mathcal{J}_1(\zeta_2^* - \zeta_1^*) -\rho_1 \}_2$. Then the Poincar\'e map 
$P_{LU\rm II}^{12}: R_1^- \to R_2^+$ is given by 
$$(\bar{\tau}_2,\bar{\mathcal{I}}_2)=G_{LU\rm II}^{12}(\rho_1,\mathcal{J}_1) + H_{LU\rm II}^{12}(\rho_1,\mathcal{J}_1) + \mathcal{O}(\mathcal{J}_1^{-2})$$ 
where
\begin{align*}
G_{LU\rm II}^{12}(\rho_1,\mathcal{J}_1)=\bigg( &\frac{m_2^-}{l_2^+} \{ \mathcal{M}_*\mathcal{J}_1(\zeta_2^* - \zeta_1^*) -\rho_1 \}_2 + \frac{l_2^+ - m_2^- -1}{l_2^+},\\
&-\frac{l_2^+}{m_2^-}\mathcal{J}_1 + \chi_{I2}(\bar{\tau}_2-1) - \chi_{I2}' \bigg)
\end{align*}
and
\[
H_{LU\rm II}^{12}(\rho_1,\mathcal{J}_1)=\left(0 , \chi_{\rm II 2}''\frac{\bar{\tau}_2-1}{\mathcal{J}_1} -\chi_{\rm II 2}'''\frac{(\bar{\tau}_2-1)^2}{\mathcal{J}_1} -\frac{\chi_{\rm II 2}''''}{\mathcal{J}_1} \right).
\]
Similarly, assume that $(\rho_2,\mathcal{J}_2) \in R_2^-$ with $\mathcal{J}_2 > V_*$ and that 
$$f(t^*_1-)-f(t^*_1+) \gtrsim -m_1^-\{ \mathcal{M}_*\mathcal{J}_2(2+\zeta_1^* - \zeta_2^*) -\rho_2 \}_2.$$ 
Then the Poincar\'e map $P_{LU\rm II}^{21}: R_2^- \to R_1^+$ is given by 
$$(\bar{\tau}_1,\bar{\mathcal{I}}_1)=G_{LU\rm II}^{21}(\rho_2,\mathcal{J}_2) + H_{LU\rm II}^{21}(\rho_2,\mathcal{J}_2) + \mathcal{O}(\mathcal{J}_2^{-2})$$ where
\begin{align*}
G_{LU\rm II}^{21}(\rho_2,\mathcal{J}_2)=\bigg( &\frac{m_1^-}{l_1^+} \{ \mathcal{J}_2(2+\zeta_1^* - \zeta_2^*) -\rho_2 \}_2 + \frac{l_1^+ - m_1^- -1}{l_1^+},\\
&-\frac{l_1^+}{m_1^-}\mathcal{J}_2 + \chi_{\rm I 1}(\bar{\tau}_1-1) - \chi_{\rm I 1}' \bigg)
\end{align*}
and
\[
H_{LU\rm II}^{21}(\rho_2,\mathcal{J}_2)=\left(0 , \chi_{\rm II 1}''\frac{\bar{\tau}_1-1}{\mathcal{J}_2} -\chi_{\rm II 1}'''\frac{(\bar{\tau}_1-1)^2}{\mathcal{J}_2} -\frac{\chi_{\rm II 1}''''}{\mathcal{J}_2} \right).
\]
\end{prop}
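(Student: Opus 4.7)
The plan is to mirror the proof of Proposition \ref{prop46}, interchanging the roles of the upper and lower chambers. Since the argument for $P_{LU\text{II}}^{21}$ is identical to that of $P_{LU\text{II}}^{12}$ up to replacing $t_2^*$ by $t_1^*$ and adjusting the angular shift as in Proposition \ref{prop47}, I focus on $P_{LU\text{II}}^{12}$.

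\emph{Transition equations.} Before the singularity at $t^*=t_2^*$ the ball is in the lower chamber with coordinates $(\tilde t,\tilde v)\in\tilde R_2^-$, and after the singularity it is in the upper chamber with $(\bar t,\bar v)=F(\tilde t,\tilde v)\in R_2^+$. Since the ball enters the upper chamber directly (without reflecting off the floor), the transition is governed by \eqref{eq6}: $\tilde v(\bar t-\tilde t)=f(\bar t)-f(\tilde t)$ and $\bar v=-\tilde v+2\dot f(\bar t)$. Using $f=1-l$ on the right of $t^*$ and $f=-m$ on the left, Taylor-expand $l$ at $t^*_+$ and $m$ at $t^*_-$, and invoke the leading-order identities $\bar v(\bar t-t^*)=l_+\bar\tau+\mathcal{O}(v^{-1})$ and $\tilde v(\tilde t-t^*)=-m_-\tilde J(\tilde\zeta-\zeta_2^*)+\mathcal{O}(v^{-1})$ from Lemmas \ref{lemma41}--\ref{lemma42}. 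This yields the first-coordinate formula for $\bar\tau$ in terms of $\tilde J(\tilde\zeta-\zeta_2^*)$; substituting $\tilde J(\tilde\zeta-\zeta_2^*)=\rho_1-\mathcal{M}_*\mathcal{J}_1(\zeta_2^*-\zeta_1^*)+\mathcal{O}(\mathcal{J}_1^{-2})$ from Lemma \ref{lemma42} reproduces the first coordinate of $G_{LU\text{II}}^{12}$.

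\emph{Expansion of the action.} For the second coordinate, apply the formula of Lemma \ref{lemma41} to $(\bar t,\bar v)$ and that of Lemma \ref{lemma42} to $(\tilde t,\tilde v)$, and combine them using the velocity reversal $\tilde v=-\bar v-2\dot l_++\mathcal{O}(v^{-1})$ from \eqref{eq6} together with the expansion of $\bar t-\tilde t$ obtained by iterating the implicit equation of \eqref{eq6}. The outcome is an identity of the form
\[
\tfrac{2m_-}{\mathcal{L}_*}\bar I+\tfrac{2l_+}{\mathcal{M}_*}\tilde J=c_0+c_1(\bar\tau-1)+\tfrac{1}{I}\bigl[c_2(\bar\tau-1)+c_3(\bar\tau-1)^2+c_4\bigr]+\mathcal{O}(v^{-2}),
\]
where $c_0,\dots,c_4$ are constants involving $l_\pm,m_\pm,\dot l_\pm,\dot m_\pm,\ddot l_\pm,\ddot m_\pm$. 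Matching with the form asserted in the proposition identifies the leading coefficients $\chi_{I2}$ and $\chi_{I2}'$ (which coincide with those of Proposition \ref{prop47}, since the reflection law $\bar v=-\tilde v+2\dot f(\bar t)$ at the collision is insensitive to whether the transition is of type I or type II), and the second-order corrections $\chi_{II2}'',\chi_{II2}''',\chi_{II2}''''$.

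\emph{Main obstacle.} As in Proposition \ref{prop46}, the difficulty lies in the bookkeeping of the $\mathcal{O}(v^{-1})$ terms: the second-order adiabatic contributions $m\dot m$ and $m^2\ddot m/(3\tilde v)$ entering $\tilde J$ must be cancelled against the analogous $l\dot l$, $l^2\ddot l/(3\bar v)$ contributions in $\bar I$ as well as against the sub-leading terms in $\bar t-\tilde t$, with careful tracking of signs at the one-sided limits $t^*\pm$. No conceptually new step is required beyond those employed in the proof of Proposition \ref{prop46}; the calculation is essentially a transcription of that argument with the substitutions $l\leftrightarrow m$ and $(\tau,\mathcal{I})\leftrightarrow(\rho,\mathcal{J})$, and with the roles of the pre- and post-singularity chambers exchanged.
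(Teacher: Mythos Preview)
Your proposal is correct and matches the paper's own treatment: the paper explicitly omits the proof of this proposition, stating that ``the computation as we have performed for Proposition~\ref{prop46} can be reproduced here to present the formula in this case,'' which is precisely the $l\leftrightarrow m$, $(\tau,\mathcal{I})\leftrightarrow(\rho,\mathcal{J})$ transcription you describe. Your outline in fact supplies more detail than the paper does.
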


\section{Trapping Regions}
\label{ScTrap}
In this section, we present the proof of Theorem \ref{thm1}.  The assumptions in the theorem lead to the creation of a trapping region where the ball gains energy exponentially fast. \\
\begin{proof}
We choose $V_* \gg 1$ so that the normal forms in Section 4 hold for $|v|>V_*$. There are two cases.\\

(i) Suppose that $f_L(t_1^*)<f_R(t_1^*)$ and $f_L(t_2^*)>f_R(t_2^*)$. The relative positions of the two slits at two critical jumps trap the ball forever in the lower region once it enters. Henceforth Proposition \ref{prop44} predicts the change of energy after one period in the lower chamber to be
$$\bar{\bar{\mathcal{J}}} = \frac{m_1^+}{m_1^-}\frac{m_2^+}{m_2^-}\mathcal{J} + \mathcal{O}(1)$$
Furthermore, the relative positions of the slits at two critical times guarantee that $m(t_1^*+)<m(t_1^*-)<0$, $m(t_2^*+)<m(t_2^*-)<0$, so the energy of the ball grows exponentially fast at rate $\frac{m_1^+}{m_1^-}\frac{m_2^+}{m_2^-}>1$ in the lower chamber.\\

\begin{figure}[!ht]
    \centering
        \begin{tikzpicture}
           \draw (0,0) rectangle (2,2)   (3,0) rectangle (5,2);
           \draw[dotted] (1,0)--(1,2)   (4,0)--(4,2);
           \draw (0,0.5)--(1,0.5)   (1,1.5)--(2,1.5)   (3,1.5)--(4,1.5)   (4,0.5)--(5,0.5);
           \draw (1,0) node[anchor=north]{$t_1^*$}
                 (4,0) node[anchor=north]{$t_2^*$};
           \draw[->] (0.9,0.5)--(1.1,0)--(1.3,1.5)--(1.4,1.2);
           \draw[<-] (3.7,1.2)--(3.8,1.5)--(3.95,0)--(4.1,0.5);
        \end{tikzpicture}
    \caption{The Trapping Lower Chamber}
    \label{fig8}
\end{figure}
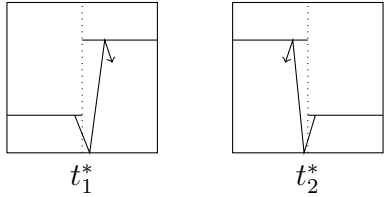

If the ball starts from the lower chamber with $v_0<-V_*$, it enjoys exponential energy growth with time immediately.\\
If the ball starts from the upper chamber with $v_0>V_*$, by Proposition \ref{prop43} and the relative positions of the slits, its energy decreases at an exponential rate $\frac{l_1^+}{l_1^-}\frac{l_2^+}{l_2^-}<1$ until it either enters the lower chamber or it enter the low energy region $|v|<V_*$ and
the normal form no longer applies. 
 For any $V>V_*$, we denote 
$$\mathcal{U}_V=\{(t_0,v_0): V < v_0 < V+1, \ \limsup v_n<V_*\}$$
We claim that $\mes(\mathcal{U}_V)=0$. Otherwise we note that 
$\mathcal{U}_V \subseteq \{|v_n|<V+1,\ \forall n>0\}$, which is bounded and invariant.
Hence, the Poincar\'e recurrence theorem implies that almost every point in $\mathcal{U}_V$ would return infinitely often to energy level $|v_n| > V$, which is impossible as it would contradict the definition of
$\mathcal{U}_V.$ Our claim implies that almost all points in the energy shell 
$\mathcal{W}_V=\{V<v<V+1\}$ eventually return to high energy level $|v_n|>V_*$, which is only made possible if the ball enters the lower chamber and the foregoing discussion ensures exponential energy growth afterwards.\\

(ii) Suppose that $f_L(t_1^*)>f_R(t_1^*)$ and $f_L(t_2^*)<f_R(t_2^*)$. Now the relative positions of the two slits at two critical jumps indicate that the upper region is trapping and then Proposition \ref{prop43} guarantees an exponential energy gain at rate $\frac{l_1^+}{l_1^-}\frac{l_2^+}{l_2^-}>1$ in the upper chamber once the ball gets trapped. The rest of the analysis is similar to Case (i).
\end{proof}

\begin{exm}
 In general it is not possible to improve the result that non-escaping orbits have finite measure to one with zero measure. For example, we start with $\tilde{f}_L(t)=\tilde{f}_R(t)=a\cos 4\pi t +0.5$ for some small $a>0$. We take $x_0=0, \lambda=0.5$, so $t_1^*=0.5, t_2^*=1.5$. We consider a 4-periodic orbit $\mathcal{P}$ starting at $t_0=0.25, v_0=2+4a$. 
Then
$$ (t_1, v_1)=(0.75,2+4a), \quad 
(t_2, v_2)=(1.25,2+4a), \quad 
(t_3, v_3)=(1.75,2+4a), \quad . $$
 
 We can slightly modify $\tilde{f}_L$ near $t_1^*, t_2^*$ in such a way that 
 $$f_L(0.5)>f_R(0.5), \quad f_L(1.5)<f_R(1.5)$$ 
 so that the upper chamber is trapping and that the periodic orbit $\mathcal{P}$ does not see this modification. Observe that $\mathcal{P}$ is elliptic for all small $a$ as the trace of the collision map $F$ along $\mathcal{P}$ is $tr (dF_{\mathcal{P}}) = 2-\frac{8a\pi^2}{1+2a} \in (0,2)$ for $0<a<\frac{1}{4\pi^2-2}$. Now the matrix $dF_{\mathcal{P}}$ is conjugate to a rotation by $2\pi\alpha$ with $\cos 2\pi\alpha=1-\frac{4a\pi^2}{1+2a}$. We can easily choose $a$ such that the rotation angle $\alpha$ is Diophantine, then Herman's Last Geometric Theorem guarantees the stability of the elliptic orbit $\mathcal{P}$, i.e. there exists an elliptic island of bounded trajectories around $\mathcal{P}$ (c.f. \cite[Theorem 4]{FK}).
\end{exm}

Although the assumptions of Theorem \ref{thm1} are compatible with existence of a positive measure
set of bounded orbits,
we can eliminate the possibility of oscillatory orbits. Recall that a (forward) oscillatory orbit is 
an orbit such that 
$$\limsup_{t\to+\infty} |v(t)| =\infty \quad \text{and} \quad \liminf_{t\to+\infty} |v(t)| <\infty. $$

\begin{cor}
 In presence of a trapping region oscillatory orbits do not exist.
\end{cor}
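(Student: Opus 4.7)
The plan is to argue by contradiction using the dichotomy established in the proof of Theorem~\ref{thm1}. Choose $V_*$ large enough that the normal forms of Section~\ref{ScNF} apply on $\{|v|>V_*\}$, and (by symmetry) assume the lower chamber is trapping, so the high-energy phase space splits as $\mathcal H=\mathcal H_T\sqcup\mathcal H_N$ with $\mathcal H_T=\{\text{lower chamber},\;|v|>V_*\}$ and $\mathcal H_N=\{\text{upper chamber},\;|v|>V_*\}$. A forward oscillatory orbit has both $\liminf|v|<\infty$ and $\limsup|v|=\infty$; the goal is to show these are incompatible.

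First I would record a uniform per-collision estimate $\bigl||v_{n+1}|-|v_n|\bigr|\le 2\max|\dot f|=:M$, immediate from \eqref{eq1}--\eqref{eq6}. It forces every re-entry of the orbit into the high-energy region $\mathcal H$ to land in the thin window $(V_*,V_*+M]$, and it promotes any divergence along Poincar\'e times into divergence along the full orbit. Next I would use Proposition~\ref{prop44}, which yields both that $\mathcal H_T$ is forward invariant under the Poincar\'e map $P$ and that $\bar{\mathcal J}=r_T\mathcal J+O(1)$ with $r_T=\tfrac{m_1^+m_2^+}{m_1^-m_2^-}>1$ inside $\mathcal H_T$. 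Consequently any orbit ever visiting $\mathcal H_T$ has $|v(t)|\to\infty$, hence $\liminf|v|=\infty$, contradicting the oscillatory hypothesis; so $\mathcal H_T$ is never visited. Every high-energy excursion of the oscillatory orbit therefore lies in $\mathcal H_N$, where Proposition~\ref{prop43} gives $\bar{\mathcal I}=r_N\mathcal I+O(1)$ with $r_N=\tfrac{l_1^+l_2^+}{l_1^-l_2^-}<1$. Iterating this contraction keeps $\mathcal I_n\le\mathcal I_{\mathrm{enter}}+C/(1-r_N)$ throughout the sojourn in $\mathcal H_N$; combined with the entry bound $\mathcal I_{\mathrm{enter}}=O(V_*)$ from the per-collision estimate, this produces a uniform $V_{\max}$ such that $|v(t)|\le V_{\max}$ whenever $|v|>V_*$. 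Since $|v|\le V_*$ outside these excursions, $|v(t)|\le V_{\max}$ for all $t$, contradicting $\limsup|v|=\infty$. The case in which the upper chamber is trapping is symmetric, exchanging $(\mathcal I,\mathcal L_*,l_i^\pm)$ with $(\mathcal J,\mathcal M_*,m_i^\pm)$.

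The main obstacle is bridging the discrete estimates of Section~\ref{ScNF} --- which control only the Poincar\'e map between singular strips --- with the continuous-time statement $\limsup|v(t)|=\infty$. A priori, the orbit could try to accumulate large velocity during a low-energy phase, where no normal form is available, and then re-enter $\mathcal H_N$ at arbitrarily high energy. The per-collision bound $M$ is precisely the device that forecloses this: since the orbit cannot leap from $|v|\le V_*$ to $|v|\gg V_*$ in a single collision, the only doorway into $\mathcal H$ is the thin window $(V_*,V_*+M]$, and from there the contraction in $\mathcal H_N$ permanently caps the velocity.
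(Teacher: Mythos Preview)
Your proof is correct and follows essentially the same line as the paper's: once in the trapping (lower) chamber at high energy the orbit escapes to infinity, while in the non-trapping (upper) chamber it contracts, so an oscillatory orbit is impossible. Your version is in fact more explicit than the paper's brief argument---the paper simply asserts that a high-energy upper-chamber orbit ``decelerates exponentially until it hits the lower chamber or the normal form no longer applies,'' whereas you supply the per-collision bound $M$ to pin the entry window to $(V_*,V_*+M]$ and thereby make the uniform cap $V_{\max}$ quantitative.
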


\begin{proof}
We may assume without loss of generality that the lower chamber is trapping. All the high energy orbits in the lower chamber gain energy exponentially immediately.\\
Now suppose that the ball is in the upper chamber and arrives at high energy level at some $v>V_*$, then it decelerates exponentially as observed in the proof of Theorem \ref{thm1} until it hits the lower chambers or the normal form no longer applies. In either case, it either starts to accelerate exponentially or remains in the low energy region $|v|<V_*$ afterwards.
\end{proof}

\section{Waiting Time for Exponential Acceleration}
\label{ScWait}
In this section we show that in the presence of the trapping region, the majority of orbits
with sufficiently high energy get trapped quickly under the hyperbolicity assumption. 
Throughout this section we assume without loss of generality that the lower chamber is trapping and $|Tr|>2$. The quantity $Tr$ is in fact the trace of the derivative of the linear map $G_U=G_{UU}^{21} \circ G_{UU}^{12}$ and the hyperbolicity assumption $|Tr|>2$ indicates that $G_U$ is hyperbolic.

\subsection{Almost Sure Escape for the Limiting Map}
We first restrict ourselves to the linear parts $G_{UU}$'s of the dynamics in Proposition \ref{prop43}, 
which approximates $P_{UU}$'s with an error of order $\mathcal{O}(I^{-1})$ when the velocity is large $v>V_*$.\\
We note that 
\begin{align*}
    (\bar{\tau}_2,\bar{I}_2)&=G_{UU}^{12}(\tau_1,I_1)\\
    &=\left( -\frac{l_2^-}{l_2^+} \{ \mathcal{L}_*\mathcal{I}_1(\theta_2^* - \theta_1^*) -\tau_1 \}_2 + 1+ \frac{l_2^-}{l_2^+}, \frac{l_2^+}{l_2^-} \mathcal{I}_1 + \Delta_2 (\bar{\tau}_2 -1) \right)
\end{align*}
if $1-\frac{l_2^+}{l_2^-} < \{ \mathcal{L}_*\mathcal{I}_1(\theta_2^* - \theta_1^*) -\tau_1 \}_2 < 1+\frac{l_2^+}{l_2^-}$. The boundary lines 
\begin{equation}
\{ \mathcal{L}_*\mathcal{I}_1(\theta_2^* - \theta_1^*) -\tau_1 \}_2=1-\frac{l_2^+}{l_2^-}, \quad \{ \mathcal{L}_*\mathcal{I}_1(\theta_2^* - \theta_1^*) -\tau_1 \}_2=1+\frac{l_2^+}{l_2^-}
\end{equation}
cut out from $R_1^+$ a sequence of boxes 
$$A_n=\{1-\frac{l_2^+}{l_2^-} +2n < \mathcal{L}_*\mathcal{I}_1(\theta_2^* - \theta_1^*) -\tau_1 < 1+\frac{l_2^+}{l_2^-}+2n\}$$ 
whose points will remain in the upper chamber under $G_{UU}^{12}$, while the other points 
will enter the lower chamber, when jumping from right to left at $t_2^*$.\\
We also observe that 
\begin{align*}
    (\bar{\tau}_1,\bar{I}_1)&=G_{UU}^{21}(\tau_2,I_2)\\
    &=\left( -\frac{l_1^-}{l_1^+} \{ \mathcal{L}_*\mathcal{I}_2(2+\theta_1^* - \theta_2^*) -\tau_2 \}_2 + 1+ \frac{l_1^-}{l_1^+}, \frac{l_1^+}{l_1^-} \mathcal{I}_2 + \Delta_1 (\bar{\tau}_1 -1) \right)
\end{align*}
if $1-\frac{l_1^+}{l_1^-} < \{ \mathcal{L}_*\mathcal{I}_2(2+\theta_1^* - \theta_2^*) -\tau_2 \}_2 < 1+\frac{l_1^+}{l_1^-}$ and that the boundary lines 
\begin{equation}
\label{BBoundaries}
\{ \mathcal{L}_*\mathcal{I}_2(2+\theta_1^* - \theta_2^*) -\tau_2 \}_2=1-\frac{l_1^+}{l_1^-}, \quad \{ \mathcal{L}_*\mathcal{I}_2(2+\theta_1^* - \theta_2^*) -\tau_2 \}_2=1+\frac{l_1^+}{l_1^-}
\end{equation}
cut out from $R_2^+$ another sequence of boxes 
$$B_n=\{1-\frac{l_1^+}{l_1^-}+2n<\mathcal{L}_*\mathcal{I}_2(2+\theta_1^* - \theta_2^*) -\tau_2<1+\frac{l_1^+}{l_1^-}+2n\}$$ 
whose points will remain in the upper chamber under $G_{UU}^{21}$, while the points outside will enter the lower chamber, when jumping from left to right at $t_1^*$.\\

We define $G_U=G_{UU}^{21} \circ G_{UU}^{12}$ on $R_1^+$. 
Both $G_{UU}^{12}$ and $G_{UU}^{21}$ are piecewise affine maps, and 
the derivative of $G_U$ part is a constant matrix $DG_U=DG_{UU}^{21} \cdot DG_{UU}^{12}$ where
\[ DG_{UU}^{12} =
   \begin{pmatrix}
    \frac{l_2^-}{l_2^+} & -\frac{l_2^-}{l_2^+} \mathcal{L}_* (\theta_2^* - \theta_1^*)\\
    \Delta_2 \frac{l_2^-}{l_2^+} & -\Delta_2 \frac{l_2^-}{l_2^+} \mathcal{L}_* (\theta_2^* - \theta_1^*) + \frac{l_2^+}{l_2^-}
   \end{pmatrix},
\]

\[ DG_{UU}^{21} =
   \begin{pmatrix}
    \frac{l_1^-}{l_1^+} & -\frac{l_1^-}{l_1^+} \mathcal{L}_* (2+\theta_1^* - \theta_2^*)\\
    \Delta_1 \frac{l_1^-}{l_1^+} & -\Delta_1 \frac{l_1^-}{l_1^+} \mathcal{L}_* (2+\theta_1^* - \theta_2^*) + \frac{l_1^+}{l_1^-}
   \end{pmatrix}
\]

Since $\det(DG_U)=1$ and $|Tr(DG_U)|>2$, it has unstable eigenvalue $\Lambda_u$ with unstable eigenvector $\bm{e}_u$, and stable eigenvalue $\Lambda_s $ with stable eigenvector $\bm{e}_s$.\\

We observe that each box $A$ is foliated by unstable lines.\\
We say that an unstable line $\gamma$ in a box $A$ is \textit{good} if it breaks after one period and at least two components remain in the upper chamber, otherwise we say it is \textit{bad}.\\
A good line is good as a solid part of it enters the trapping region after one period under the linear map $G_U$:
\begin{lemma}\label{lemma61}
  Let $\gamma$ be a good unstable line in some box $A$. Then the proportion of points on $\gamma$ which remain in the upper chamber after one period is at most 
 $$D=\frac{1+2\frac{l_1^+}{l_1^-}}{2+\frac{l_1^+}{l_1^-}}<1. $$
\end{lemma}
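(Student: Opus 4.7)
The plan is to reduce the lemma to a direct geometric count of how the image $\gamma'=G_{UU}^{12}(\gamma)\subset R_2^+$ intersects the periodic family $\{B_n\}$.

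First I would use that $G_{UU}^{12}$ is affine on the box $A$, so the proportion of $\gamma$ remaining in the upper chamber equals the proportion of $\gamma'$ contained in $\bigcup_n B_n$. Reading off the first component of $G_{UU}^{12}$, I observe that as $v_1:=\mathcal{L}_*\mathcal{I}_1(\theta_2^*-\theta_1^*)-\tau_1$ sweeps the width $2\,l_2^+/l_2^-$ of the strip $A$, the image coordinate $\bar{\tau}_2$ sweeps the full interval $[0,2]$. Thus $\gamma'$ fully spans the strip $G_{UU}^{12}(A)=\{\bar{\tau}_2\in[0,2]\}$ in $R_2^+$.

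Next I would describe the boxes $B_n$ as a $2$-periodic family of strips in the coordinate $v_2:=\mathcal{L}_*\bar{\mathcal{I}}_2(2+\theta_1^*-\theta_2^*)-\bar{\tau}_2$, of box-width $2\,l_1^+/l_1^-$ and gap-width $2(1-l_1^+/l_1^-)$. Along the straight line $\gamma'$, $v_2$ is an affine function of $\bar{\tau}_2$, so the intersection $\gamma'\cap\bigcup_n B_n$ is a union of intervals. Because $\gamma$ is good, $\gamma'$ meets at least two distinct strips $B_n,B_{n+1}$, which forces $\gamma'$ to cross at least one complete gap of length $2(1-l_1^+/l_1^-)$ in the $v_2$-coordinate. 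This gives a definite lower bound on the portion of $\gamma'$ that leaves the upper chamber.

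The heart of the argument is then a quantitative upper bound on the total $v_2$-length of $\gamma'$. Here I would use the explicit form of $DG_{UU}^{12}$ together with the unstable direction of the constant matrix $DG_U=DG_{UU}^{21}\,DG_{UU}^{12}$ (whose slope is determined by the matrix entries $1/r_i$, $-T_j/r_i$, $\Delta_i/r_i$, $r_i-\Delta_i T_j/r_i$). Combining the range $\bar{\tau}_2\in[0,2]$ with this slope produces an explicit $v_2$-length of $\gamma'$, from which one reads off the maximal possible length of $\gamma'\cap \bigcup_n B_n$. Dividing this maximum by the total $v_2$-length and simplifying should yield the ratio $D=(1+2\,l_1^+/l_1^-)/(2+l_1^+/l_1^-)$.

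The main obstacle is the algebraic step at the end: one must verify that the worst case (which turns out to be $\gamma'$ meeting exactly two adjacent strips $B_n,B_{n+1}$ with a single full gap traversed in between) indeed gives the claimed ratio $D$ rather than a different quantity. This reduces to a short but careful manipulation of the ratios $l_1^+/l_1^-$ and $l_2^+/l_2^-$ arising from the affine formulas of Proposition~\ref{prop43}.
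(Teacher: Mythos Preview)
Your reduction is correct and matches the paper: $G_{UU}^{12}$ is affine on $A$, so the proportion in question equals the fraction of $\gamma'=G_{UU}^{12}(\gamma)$ lying in $\bigcup_n B_n$; the image $\gamma'$ spans the full range $\bar\tau_2\in[0,2]$; and in the coordinate $v_2=\mathcal{L}_*\bar{\mathcal I}_2(2+\theta_1^*-\theta_2^*)-\bar\tau_2$ the sets $B_n$ are a $2$-periodic family of strips of width $2l_1^+/l_1^-$ separated by gaps of width $2(1-l_1^+/l_1^-)$. You also correctly note that ``good'' forces $\gamma'$ to cross at least one full gap.

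Where your plan diverges from the paper is in what you call ``the heart of the argument''. You propose to compute the $v_2$-length $L$ of $\gamma'$ explicitly from the unstable eigenvector of $DG_U$, and then expect the resulting ratio to simplify to $D$. But $D=(1+2l_1^+/l_1^-)/(2+l_1^+/l_1^-)$ depends \emph{only} on $l_1^+/l_1^-$, whereas $L$ involves the unstable slope and hence all of $l_i^\pm$, $\dot l_i^\pm$, $\theta_i^*$; there is no reason the quotient should collapse to $D$. (Your own remark that the simplification involves ``the ratios $l_1^+/l_1^-$ and $l_2^+/l_2^-$'' is a warning sign: $l_2^\pm$ does not appear in $D$ at all.) What your route actually produces is a bound of the form $1-2(1-l_1^+/l_1^-)/L$, which is parameter-dependent and not equal to the stated $D$.

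The paper bypasses the eigenvector computation entirely. Once $\gamma'$ is known to meet $N\ge 2$ of the $B$-strips, the argument is purely combinatorial: for fixed $N$, the proportion of $\gamma'$ inside the strips is maximised when both endpoints lie on strip boundaries, and this extremal proportion is monotone in $N$, so the worst case is $N=2$, giving $D$. No information about the slope or length of $\gamma'$ is used beyond the fact that it meets at least two strips. Replace your slope computation by this strip-counting estimate and the constant $D$ falls out directly, with no algebra involving $l_2^\pm$ or the $\Delta_i$.
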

\begin{proof}
We first note that $G_{UU}^{12}(\gamma)$ remains a complete piece in $R_2$ as $\gamma$ lies in $A$ and that $G_{UU}^{12}$ maps the boundaries of $A$ into two vertical lines 
$$G_{UU}^{12} \left(\{ \mathcal{L}_*\mathcal{I}_1(\theta_2^* - \theta_1^*) -\tau_1 \}_2=1-\frac{l_2^+}{l_2^-} \right) \subseteq \{\tau_2=2\},$$ 
$$G_{UU}^{12} \left(\{ \mathcal{L}_*\mathcal{I}_1(\theta_2^* - \theta_1^*) -\tau_1 \}_2=1+\frac{l_2^+}{l_2^-} \right) \subseteq \{\tau_2=0\}.$$

$G_{UU}^{12}(\gamma)$ has to stretch across at least two $B$-boxes if $\gamma$ has at least two pieces remaining in the upper chamber after one period.
\begin{figure}[!ht]
    \centering
       \begin{tikzpicture}
          \draw (0,0)--(2,1)--(2,1.5)--(0,0.5)--(0,0)
                (0,0.8)--(2,1.8)--(2,2.3)--(0,1.3)--(0,0.8)
                (0,3)--(2,4)--(2,3.5)--(0,2.5)--(0,3);
          \draw[red] (1.7,0.85)--(0,3);
          \draw[dotted] (1.3,2.2)--(1.3,2.8);
          \draw[<->] (-0.1,0.5)--(-0.1,1.3) node[midway,left]{2};
          \draw[<->] (0.08,0.8)--(0.08,1.3) node[midway,right]{$\frac{2l_1^+}{l_1^-}$};
       \end{tikzpicture}
    \caption{A good curve partly enters the trapping region}
    \label{fig9}
\end{figure}
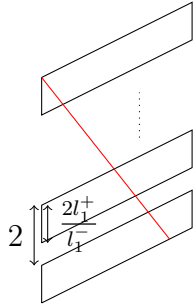

Suppose that $G_{UU}^{12}(\gamma)$ stretches across $N$ $B$-boxes for some $N>1$. 
It is easy to see that for a fixed $N$, the highest proportion of points staying in the upper chamber is achieved when $G_{UU}^{12}(\gamma)$ ends on the boundaries of the top and bottom boxes as shown in Figure \ref{fig9}. However the height of a $B$-box is equal to $2\frac{l_1^+}{l_1^-}$ (c.f. \eqref{BBoundaries}) while the height of the fundamental domain is equal to 2 (c.f. Figure \ref{fig9}). This implies that in the optimal situation $\frac{2(1-\frac{l_1^+}{l_1^-})(N-1)}{2N+2\frac{l_1^+}{l_1^-}}$ of the points on $G_{UU}^{12}(\gamma)$ land in the lower chamber after jumping from left to right at $t_1^*$. This proportion is larger than $\frac{(1-\frac{l_1^+}{l_1^-})(2-1)}{2+\frac{l_1^+}{l_1^-}}$ as it is an increasing function in $N$ and $N\geq 2$. Then the largest portion which remains in the upper chamber is given by 
$D=\frac{1+2\frac{l_1^+}{l_1^-}}{2+\frac{l_1^+}{l_1^-}}.$ 
Recall that the relative positions of two slits at $t_1^*$ implies that $\frac{l_1^+}{l_1^-}<1$, so $D<1$.
\end{proof}

Next we need to control the number of the short bad pieces as an unstable line breaks under the iterations of the linear map $G_U$.\\
Suppose $\gamma$ is an unstable line in some box $A$. For $x \in \gamma$, we denote as $r_n(x)$ the distance from $x_n$ to the nearest boundary of the component $\gamma_n$ containing $x_n$. Employing the argument in Section 5 of \cite{Dol} we obtain the following Growth Lemma.
\begin{lemma}[Growth Lemma]\label{lemma62}
  There exists a constant $C^*$ s.t. for any small $\epsilon>0$ and any $n\in\mathbb{N}$ 
  $$\mes_{\gamma}\{x\in\gamma: r_n(x)<\epsilon\} \le C^* \epsilon$$
\end{lemma}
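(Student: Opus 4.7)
The plan is to execute a Chernov--Markarian-style growth-lemma argument, following Section~5 of \cite{Dol}. Our setting is particularly clean because $G_U$ is piecewise \emph{affine} (so there is no nonlinear distortion) and uniformly hyperbolic (every unstable tangent vector is stretched by the single factor $\Lambda_u>1$).

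First I would describe the singularity set $\Sigma\subset R_1^+$ of $G_U$: it is the union of the boundaries of the boxes $A_n$ (on which $G_{UU}^{12}$ first sends the orbit out of the upper chamber) together with the $G_{UU}^{12}$-preimages of the boundaries of the boxes $B_n$ (on which the subsequent application of $G_{UU}^{21}$ does so). In the $(\tau,\mathcal{I})$-coordinates $\Sigma$ is a union of two families of parallel affine lines, of slopes determined by $\mathcal{L}_*(\theta_2^*-\theta_1^*)$ and $\mathcal{L}_*(2+\theta_1^*-\theta_2^*)$. Under the assumption $|Tr|>2$ one reads off from the explicit matrices $DG_{UU}^{12}, DG_{UU}^{21}$ that the unstable eigenvector $\bm{e}_u$ of $DG_U$ has slope distinct from either family; consequently there is a constant $C_1$ such that any unstable segment of length $L$ meets $\Sigma$ in at most $C_1L+1$ points.

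Next I would set up the recursion. Put $Z_n(\epsilon):=\mes_\gamma\{x\in\gamma:r_n(x)<\epsilon\}$ and denote by $N_n$ and $\mathcal{L}(\gamma_n)$ the number of components and total length of $\gamma_n$. Every endpoint of a component of $\gamma_{n+1}$ is either the $G_U$-image of an endpoint of a component of $\gamma_n$, or the $G_U$-image of a fresh singularity crossing $p\in\gamma_n\cap\Sigma$. Using the uniform $\Lambda_u$-expansion of the unstable direction one obtains
\[
Z_{n+1}(\epsilon)\le Z_n(\epsilon/\Lambda_u)+\frac{2\epsilon}{\Lambda_u^{n+1}}\left(C_1\mathcal{L}(\gamma_n)+N_n\right),
\]
while $\mathcal{L}(\gamma_n)\le\Lambda_u^n\mathcal{L}(\gamma)$ is immediate. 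A bookkeeping argument, carried out in \cite{Dol} by separating the contributions of short and long components, yields the uniform bound $N_n\le C_2\Lambda_u^n$, turning the recursion into $Z_{n+1}(\epsilon)\le Z_n(\epsilon/\Lambda_u)+c\epsilon$ with a constant $c$ independent of $n$.

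Finally I would unroll the recursion. Using $Z_0(\delta)\le 2\delta$ one gets
\[
Z_n(\epsilon)\le \frac{2\epsilon}{\Lambda_u^n}+c\epsilon\sum_{k=0}^{n-1}\Lambda_u^{-k}\le\left(2+\frac{c\Lambda_u}{\Lambda_u-1}\right)\epsilon=:C^*\epsilon,
\]
which is the desired estimate. The main obstacle is the uniform bound $N_n\le C_2\Lambda_u^n$ on the component count: the naive combinatorial estimate only gives $N_{n+1}\le 2N_n+O(\Lambda_u^n)$, which suffices when $\Lambda_u>2$ but not in general. The refinement in \cite{Dol} exploits the fact that short components, once stretched by $\Lambda_u$, either escape to the lower chamber or reach the long regime in a bounded number of steps, which removes the spurious $2^n$ factor.
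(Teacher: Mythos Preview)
Your overall strategy is sound and the recursion for $Z_n(\epsilon)$ is set up correctly, but the argument breaks at exactly the point you flag as the ``main obstacle'': the bound $N_n\le C_2\Lambda_u^n$. Your proposed fix---that short components, once stretched, either escape or become long in boundedly many steps---does not work as stated and is not what \cite{Dol} does. A short unstable piece can be cut by several singularity lines (one from each of the families you identify, plus the preimages under $G_{UU}^{12}$), producing several short subpieces; nothing prevents this from recurring, so the naive recursion $N_{n+1}\le K\,N_n+C_1\Lambda_u^n L$ with $K\ge 2$ only yields $N_n=O(\Lambda_u^n)$ when $\Lambda_u>K$, and there is no mechanism in your sketch that removes the $K^n$ contribution otherwise.

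The paper (following \cite{Dol}) avoids bounding $N_n$ altogether. The key observation is that the singularity set of $G_U^n$ consists of straight lines with at most $O(n)$ distinct slopes, so the \emph{local complexity} $k_n$ (the maximal number of pieces into which a sufficiently short unstable segment is cut by $G_U^n$) grows only linearly in $n$. One then fixes $n_0$ with $2k_{n_0}/\Lambda_u^{n_0}<1$---possible since exponential beats linear---and replaces $G_U$ by $G_U^{n_0}$. With this done, one introduces \emph{artificial cuts} at scale $\delta_0$ so that every piece stays short; the recursion becomes
\[
\mes_\gamma\{\bar r_{n+1}<\epsilon\}\le \frac{2k_1(\delta_0)}{\Lambda_u}\,\mes_\gamma\{\bar r_n<\epsilon\}+\frac{2k_1(\delta_0)L}{\delta_0}\,\epsilon,
\]
with the first coefficient strictly less than $1$ by construction, and this iterates directly to the desired bound. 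So the missing ingredient in your proof is not a refined count of components but the linear-complexity/high-iterate trick combined with artificial subdivision.
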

\begin{proof}
 Let $k_n(\delta)$ denote the max number of the pieces that an unstable line of length less than $\delta$ can be cut into. We define $k_n=\lim_{\delta \to 0} k_n(\delta)$. We claim that $k_n \le 8n$. Indeed since the singularities of $G_U^n$ are lines and there are at most $8n$ possibilities for slopes. Consequently, there exists $\delta_0$ so small that $k_n(\delta) \le 16n$ for any $\delta<\delta_0$. We choose $n_0$ such that $\frac{32n_0}{\Lambda_u^{n_0}}<1$ and by replacing $G_U$ with $G_U^{n_0}$ we can always assume $n_0=1$.\\
 For inductive purposes we cut a long unstable line into pieces shorter than $\delta_0$ and let $\bar{r}_n(x)$ denote the distance from $x_n$ to the nearest real or artificial boundary of the component containing $x_n$. We note that by doing so we improve the estimate as $\bar{r}_n(x) \le r_n(x)$ and it suffices to prove the statement for $\bar{r}_n$.\\
 First we observe that 
\begin{equation}
\label{R0Bound}
\mes_{\gamma} \{ \bar{r}_0(x)<\epsilon \} \le \frac{2L}{\delta_0}\epsilon .
\end{equation}
 $\bar{r}_{n+1}(x)$ is less than $\epsilon$ if $x_{n+1}$ either passes a real or artificial singularity, where $L$ is the unstable height of $\gamma$. The former is controlled by 
 $2k_1 (\delta_0) \mes_{\gamma} \{ r_n<\frac{\epsilon}{\Lambda_u} \}$ while the latter by $2k_1 (\delta_0)\frac{L}{\delta_0}\epsilon$. Therefore 
 $$\mes_{\gamma} \{ \bar{r}_{n+1}<\epsilon \} \le \frac{32}{\Lambda_u} 
 \mes_{\gamma} \{ r_n<\epsilon \} + 32\frac{L}{\delta_0}\epsilon.$$
 Thus by induction we conclude that 
 $$\mes_{\gamma} \{ \bar{r}_n<\epsilon \} \le \left(\frac{32}{\Lambda_u}\right)^n \mes_{\gamma} \{ \bar{r}_0(x)<\epsilon \} + 32\frac{L}{\delta_0}\epsilon \left(1+\cdots +\left(\frac{32}{\Lambda_u}\right)^{n-1} \right).$$
 Since $\frac{32}{\Lambda_u}<1,$ \eqref{R0Bound}
gives the desired growth control with 
$$ C^*=\left(\frac{32}{\Lambda_u}\right)\frac{2L}{\delta_0}+\frac{32L}{\delta_0(1-\frac{32}{\Lambda_u})}.
\qedhere
$$   
\end{proof}

Finally we show that under the linear approximation map $G_U$ almost every point will eventually escape to the trapping region:
\begin{prop}\label{prop63}
   In each box $A$, for any $\epsilon>0$, there exists $N=N(\epsilon)$ such that all but an $\epsilon$-measure set of points in $A$ enter the lower chamber within $N$ periods. In particular, almost every point will leave the upper chamber in the future.
\end{prop}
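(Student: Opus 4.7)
My approach is to combine Lemma \ref{lemma61}, which gives a uniform loss of at least a $(1-D)$ fraction of measure on good unstable arcs, with the Growth Lemma \ref{lemma62}, which controls the measure carried by short arcs, and then to upgrade the resulting one-step bound into geometric decay by iterating with the hyperbolic expansion rate $\Lambda_u>1$. The first step is a reduction to a one-dimensional statement via Fubini. Foliate $A$ by unstable fibers of the linear map $G_U$, and for each fiber $\gamma\subset A$ define
\[
M_n(\gamma)=\mes_\gamma\{x\in\gamma:G_U^j(x)\in\text{upper chamber for all }0\le j\le n\}.
\]
It suffices to prove $M_n(\gamma)\to 0$ for each fiber, since dominated convergence then gives $\mes(U_n)\to 0$, where $U_n$ is the corresponding two-dimensional set, and $N(\epsilon)$ may be chosen as the first $n$ with $\mes(U_n)<\epsilon$.

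Next I would establish a one-step contraction on $\gamma$. Fix a length threshold $\delta_g>0$ small enough that any surviving unstable arc of length at least $\delta_g$ in an $A$-box is a good curve in the sense of Lemma \ref{lemma61}; the fraction of its measure remaining in the upper chamber after one more application of $G_U$ is then at most $D<1$. The total measure of arcs of length less than $\delta_g$ at time $n$ is bounded by $C^*\delta_g$ via Lemma \ref{lemma62} applied with $\epsilon=\delta_g$. Combining these two estimates yields
\[
M_{n+1}(\gamma)\le D\,M_n(\gamma)+(1-D)\,M_n^{\mathrm{short}}(\gamma).
\]

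This one-step bound alone only yields the bounded $\limsup_n M_n(\gamma)\le C^*\delta_g$; to obtain honest decay to zero, I would pass from $G_U$ to its $k$-fold iterate $G_U^k$ for large $k$. Under $G_U^k$ the effective unstable expansion is $\Lambda_u^k$, so the corresponding goodness threshold $\delta_g^{(k)}$ shrinks like $\Lambda_u^{-k}$, while the Growth-Lemma constant $C^*_k$ grows only linearly in $k$ since the number of singularity crossings per period is bounded. Hence for any $\eta>0$, choosing $k$ large makes the short-arc measure at time $n+k$ less than $\eta$. Feeding this into the analogue of the one-step contraction applied to $G_U^k$ produces $M_{n+k}(\gamma)\le D'\,M_n(\gamma)+\eta$ for some $D'<1$ independent of $n$, and induction along the arithmetic progression $n_0,n_0+k,n_0+2k,\dots$ yields $\limsup_n M_n(\gamma)\le\eta/(1-D')$. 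Since $\eta$ can be driven to zero by taking $k$ large, we conclude $M_n(\gamma)\to 0$.

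The main obstacle will be making the interplay between Lemma \ref{lemma62} and the iterate $G_U^k$ quantitative: one must verify that the Growth-Lemma constant $C^*_k$, the goodness threshold $\delta_g^{(k)}$, and the hyperbolic expansion $\Lambda_u^k$ combine so that $C^*_k\cdot\delta_g^{(k)}\to 0$, which requires tracking carefully how the cutting count and length scale propagate through the $k$-fold composition. The strict hyperbolicity $\Lambda_u>1$ granted by the assumption $|Tr|>2$ is precisely what makes this balance achievable.
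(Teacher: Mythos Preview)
Your strategy is sound and ultimately rests on the same three ingredients as the paper (Lemma~\ref{lemma61}, the Growth Lemma, and hyperbolic expansion), but it is organized as a recursive contraction rather than the paper's one-shot dichotomy. The gap you flag as ``the main obstacle'' is real and is exactly where the substance lies: you must justify that $\delta_g^{(k)}\sim\Lambda_u^{-k}$ with a retention bound $D'$ that does \emph{not} degrade with $k$. The reason this holds is the observation that if an arc is \emph{bad} at every one of $k$ consecutive steps, then at each step at most one component survives, so after $k$ steps there is a single piece of length at most $L$; pulling back, the retained fraction is at most $L/(\Lambda_u^k\ell)$. Combined with Lemma~\ref{lemma61} for the case where some step in the window is good, this gives retention $\le D$ whenever $\ell\ge L/(D\Lambda_u^k)$, so you may take $\delta_g^{(k)}=L/(D\Lambda_u^k)$ and $D'=D$. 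Two minor corrections: $\delta_g$ is a fixed geometric threshold (determined by the $B$-box width), not something you can choose ``small enough''; and you do not need a $k$-dependent $C_k^*$ at all, since the Growth Lemma already gives a constant $C^*$ uniform in $n$, which you simply apply at times $nk$.

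The paper's proof short-circuits the recursion by a pigeonhole dichotomy: over $N=kl+1$ periods, either the orbit lands on good arcs at least $k$ times (retained measure $\le D^kL$), or it contains a run of $l$ consecutive bad steps. The bad-run case is then split into arcs that are short at the start of the run (handled by the Growth Lemma with $\epsilon=\Lambda_u^{-l/2}$) and arcs that are long (handled by the same bad-run observation above, giving surviving measure $\le L^2/\Lambda_u^l$), summed over the $\le N$ possible starting times. This yields an explicit $N(\epsilon)$ directly, which is convenient for the application to Theorem~\ref{thm2}. Your recursive version would give the same qualitative dependence but packages the estimate differently; once the bad-run observation is supplied, the two arguments are essentially equivalent.
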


\begin{proof}
Fix $\epsilon>0$. Choose $k,l$ such that 
$$D^k L< 0.5 \epsilon\quad\text{and} \quad (kl+1) \frac{C^*+L^2}{\Lambda_u^{l/2}} < 0.5 \epsilon,$$ and take $N=kl+1$.\\
We suppose that under the linear map $G_U$ a point $x$ on a unstable line $\gamma$ stays in the upper chamber up to $N$ periods.\\
If the trajectory of $x$ lands on good lines more than $k$ times in $N$ periods, then Lemma \ref{lemma61} shows that for good lines the portion which remains in the upper chamber in the next period is at
most $D.$ Hence by induction we see that 
$$\mes_{\gamma}\{ x\in\gamma : \hbox{$\{P_U^m x\}_{m=0}^N$ 
visits good lines more than $k$ times} \}<D^k L$$

If instead the trajectory of $x$ visits good lines less than $k$ times in $N$ periods, then it has to visit consecutively $l$ bad lines at least once in $N$ periods.\\
Now suppose that the trajectory segment $x_n, \cdots, x_{n+l}$ land on bad lines $\gamma_n,\cdots,\gamma_{n+l}$ for some $n<N$ and we denote as $\mathcal{B}_n$ the set of all such $x\in\gamma$ that lands badly during $n$ to $n+l$ periods. We subdivide $\mathcal{B}_n$ into $\mathcal{B}_{n,L}$ and $\mathcal{B}_{n,S}$, where $\mathcal{B}_{n,L}$ collects points with $|\gamma_n| \ge \Lambda_u^{-l/2}$ and $\mathcal{B}_{n,S}$ collects points with $|\gamma_n| < \Lambda_u^{-l/2}$.\\
By Lemma \ref{lemma62}, $|\mathcal{B}_{n,S}| \le C^* \Lambda_u^{-l/2}$. On the other hand, it follows from uniform hyperbolicity that 
$$ \mes_{\gamma_n} \{\hbox{$x_n$ returns badly for next $l$ periods} \} = \frac{|\gamma_{n+l}|}{\Lambda_u^l} \le \frac{L}{\Lambda_u^l}.$$ 
Hence 
$$|\mathcal{B}_{n,L}| \le \frac{L}{\Lambda_u^l} \sum_{|\gamma_n| \ge \Lambda_u^{-l/2}} |G_U^{-n} \gamma_n| \le \frac{L}{\Lambda_u^l} |\gamma| \le \frac{L^2}{\Lambda_u^l}.$$

Combining the  estimates on $\mathcal{B}_{n,L}$ 
and $\mathcal{B}_{n,S}$, 
we have 
$$|\mathcal{B}_n| \le \frac{C^*}{\Lambda_u^{l/2}} + 
\frac{L^2}{\Lambda_u^l} \le \frac{C^*+L^2}{\Lambda_u^{l/2}}.$$
Consequently the set $\mathcal{B}$ of points on $\gamma$ which make $l$ consecutive bad landings is controlled in size by 
$$|\mathcal{B}| \le (kl+1) \frac{C^*+L^2}{\Lambda_u^{l/2}}.$$

Since a box $A$ is foliated by unstable lines, we conclude by a disintegration of measure argument and our choice of $k,l$ that under the linear map $G_U$ the set of points in $A$ which stay in the upper chamber for at least $N$ periods has measure less than $\epsilon.$ 
\end{proof}

\subsection{Quick Escape for the Actual Map.}
By Proposition \ref{prop43}, the fundamental domains of $P_{UU}$ are $\mathcal{O}(\mathcal{I}^{-1})$-deformation of the boxes $A,B$ and $P_{UU}=G_{UU}+\mathcal{O}(\mathcal{I}^{-1})$.\\
Now we prove Theorem \ref{thm2}.

\begin{proof}[Proof of Theorem \ref{thm2}]
Fix $\epsilon>0$ and a box $A$ with large energy $\tilde{\mathcal{I}}_0$ (to be specified later). By Proposition \ref{prop63} we choose $N$ such that in each $A$-box the points that remain in the upper chamber up to $N$ periods under the linear approximation $G_U$ take up a set of measure less than $0.5 \epsilon$, i.e. we take $N=kl+1$ where $k,l$ are integers such that 
\begin{equation}
\label{ChooseKL}
k>\frac{\log (0.25\epsilon/L)}{\log D}
\quad\text{and}\quad
\frac{kl+1}{\Lambda_u^{l/2}} < \frac{0.25\epsilon}{C^*+L^2}. 
\end{equation}
We shall show that the statement of Theorem \ref{thm2} holds with some large $\tilde{\mathcal{I}}_0=\tilde{\mathcal{I}}_0(\epsilon)$ and
\begin{equation}
\label{T-N}
T=2N.
\end{equation}

Let $A_n^\delta$ and $B_n^\delta$ denote the points in $A_n$ and $B_n$ which are closer than
$\delta$ to the boundary, 
$$\mathcal{A}^\delta=\bigcup_n A_n^\delta, \quad
\widetilde{\mathcal{A}}^\delta=\bigcup_n \left(A_n-\setminus A_n^\delta\right), $$
$$\mathcal{B}^\delta=\bigcup_n B_n^\delta, \quad
\widetilde{\mathcal{B}}^\delta=\bigcup_n \left(B_n-\setminus B_n^\delta\right).$$
 Choose $\delta<0.5 \epsilon$ so that the set of points in the box $A$ which visit either $\mathcal{A}^\delta$ or 
 $\mathcal{B}^\delta$ during the first $N$ iterations is less than $0.5 \epsilon.$
 
By Proposition \ref{prop43}, 
there is a constant $C_1$ such that if 
$P_{UU}^{12}(x)\in \widetilde{\mathcal{B}}^{C_1 /\mathcal{I}}$ and 
$P_{UU}^{21}(P_{UU}^{12} x)\in \widetilde{\mathcal{A}}^{C_1 /\mathcal{I}}$ 
then the orbit of $x$ stays in the upper chamber for the next period and 
$$ \left| P_U(x)-G_U(x)\right|\le 
\frac{C_1}{\mathcal{I}} \quad \text{where} \quad
P_U=P_{UU}^{21} \circ P_{UU}^{12}. $$

Accordingly there is a constant $C_2$ such that if for some $n\leq N$
\begin{equation}
\label{WellInside}
P_{UU}^{12} P_U^k (x)\in \widetilde{\mathcal{B}}^{\frac{C_2\Lambda_u^N}{ \mathcal{I}^*}}, \quad
P_U^{k+1} (x)\in \widetilde{\mathcal{A}}^{\frac{C_2 \Lambda_u^N}{ \mathcal{I}^*}}
\end{equation}
and $\mathcal{I}_k\ge \mathcal{I}^*$
for $k < n$ then the real orbit of $x$ stays in upper chamber for the first $n$ iterations
and
\begin{equation}
\label{Nearby}
 \left| P_U^n(x)-G_U^n (x)\right|\le \frac{C_2 \Lambda_u^N}{ \mathcal{I}^*}. 
 \end{equation}
 
Next, set $C_3=\frac{l_2^+ l_1^+}{l_2^- l_1^-} < 1.$ Then during $N$ iterations 
the value of $I$ cannot drop by more than $C_3^N$ times. Hence if $x$ satisfies \eqref{WellInside}
and $\mathcal{I}_0\geq C_3^N \mathcal{I}^*$ then \eqref{Nearby} holds.\\

Now choose $\mathcal{I}^*$ so that 
\begin{equation}
\label{IVeryLarge}
\frac{C_2 \Lambda_u^N}{\mathcal{I^*}}<\delta 
\end{equation}

Now we consider the orbits where $\bar{\mathcal{I}}\leq\mathcal{I}_0\leq \bar{\mathcal{I}}+1$
for some $\bar{\mathcal{I}}\geq C_3^N \mathcal{I}^*.$
There are three possibilities:

(i) The real orbit of $x$ leaves the upper chamber at some period $n<N$;

(ii) The real orbit of $x$ stays in $\widetilde{\mathcal{A}}^{\delta}$ for the first $N$ iterations;

(iii) The real orbit of $x$ stays in $\widetilde{\mathcal{A}}^{\delta}$ until it hits 
$\mathcal{A}^{\delta}\cup 
(P_{U}^{12})^{-1} \mathcal{B}^{\delta}$
at some period $n<N$.

Proposition \ref{prop63} and our choice of $\delta$ and $ \mathcal{I}^*$ imply that the set of orbits where either (ii) or (iii) happens has measure smaller than $\epsilon.$

This completes the proof of Theorem \ref{thm2}.
\end{proof}

\section{Conclusion}
\label{ScFinal}
We have described in this paper a two-dimensional exponential Fermi accelerator: a rectangular billiard with two moving slits. We found a mechanism for a particle to gain energy exponentially fast, i.e. the trapping regions. When the relative positions of two slits change at two critical jumps, a trapping region, either the upper or lower chamber, is created so that every high velocity orbit starts to gain energy exponentially fast once it gets trapped. We demonstrated that a trapping region exists for sizable choices of parameters and the exponential acceleration happens for almost all high energy orbits. Moreover under additional hyperbolicity assumptions on the parameters we provided an explicit estimate on the waiting time until which the exponential acceleration starts for most high-energy orbits.\\

It is worth noting that all the analysis done in this paper is based on the normal forms in the high energy region.
The normal form implies exponential energy growth almost surely if a particle starts with sufficiently high initial velocity, 
and it eliminates the possibility of oscillatory orbits. The normal forms do not apply in low energy 
region where we might have bounded orbits for certain wall motion. 
In this paper we did not analyze the case when a trapping region does not exist, which can be easily achieved by choosing parameters such that the relative positions of two slits do not change at two critical jumps. 
Our normal forms still apply even in this complicated case but the analysis would be more delicate as the particle needs to make a choice of traveling up or down every time it jumps. 
We also note that in the non-resonant case when the periods of the particle and the wall
are incommensurable, the normal form still applies, however the jumping time depends on the period.
In particular, the jumping times become dense on the period which precludes the existence of the trapping
region, so the problem becomes similar to the resonant non-trapping case.
These observations provide possible directions for future work.


\bibliography{billiardslit} 
\bibliographystyle{plain}

\end{document}